\newtheorem{corollary}{Corollary}
\newtheorem{lemma}{Lemma}
\newtheorem{proposition}{Proposition}
\newtheorem{remark}{Remark}
\newtheorem{theorem}{Theorem}
\DeclareMathOperator{\conv}{conv}
\DeclareMathOperator{\spn}{span}
\DeclareMathOperator{\pos}{pos}
\begin{document}
\title{Every generating polytope is strongly monotypic}
\author{Vuong Bui}
\address{Vuong Bui, Institut f\"ur Informatik, Freie Universit\"{a}t
Berlin, Takustra{\ss}e~9, 14195 Berlin, Germany \hfill\break
\mbox{\hspace{4mm}} Institute for Artificial Intelligence, Vietnam National University, Hanoi, 144 Xuan Thuy Street, Hanoi 100000, Vietnam}
\thanks{Part of the work was supported by the Deutsche Forschungsgemeinschaft
(DFG) Graduiertenkolleg ``Facets of Complexity'' (GRK 2434).  The work
was initiated during the time the author studied at Moscow Institute
of Physics and Technology.}
\email{bui.vuong@yandex.ru}

\begin{abstract}
	We prove an old conjecture of McMullen, Schneider and Shephard that every polytope with the generating property is strongly monotypic. The other direction is already known, which implies that strong monotypy and the generating property for polytopes are the same notion. A criterion for monotypic and strongly monotypic polytopes is also given.
\end{abstract}

\maketitle

\section{Introduction}
Monotypic polytopes and strongly monotypic polytopes in $\mathbb R^n$ were first introduced in \cite{mcmullen1974monotypic} by McMullen, Schneider and Shephard.  Monotypic polytopes can be seen as a subclass of simple polytopes: A polytope $P$ is monotypic if every polytope with the same set of normals as $P$ is combinatorially equivalent to $P$.

On the other hand, strongly monotypic polytopes is a subclass of monotypic polytopes: A polytope $P$ is strongly monotypic if every polytope $Q$ with the same set of normals as $P$ satisfies that the arrangements of the hyperplanes containing the facets of $P$ and $Q$ are combinatorially equivalent.

In \cite{mcmullen1974monotypic} it was shown that monotypy and strong monotypy have relations to the intersection properties. For monotypy, the intersection of every two translates of a monotypic polytope $P$ is either empty or homothetic to a Minkowski summand of $P$. For strong monotypy, we have a stronger conclusion: The intersection of every two translates of a strongly monotypic polytope $P$ is either empty or a Minkowski summand of $P$. We remind that the Minkowski sum of two sets $X,Y$ is $\{x+y: x\in X, y\in Y\}$.

Note that the above property for strongly monotypic polytopes is actually the generating property for polytopes. The original form of the generating property for a convex body $K$ is: The intersection of every family of translates of $K$ is either empty or a Minkowski summand of $K$. In \cite{karasev2001characterization}, the property was shown to be equivalent to the form where a family of translates is replaced by a pair of translates. Therefore, a strongly monotypic polytope is always a generating set. Other sets that are not necessarily polytopes were shown to posses this property in \cite{Balashov_2000}, \cite{borowska2010intersection}.

In the other direction, it is natural to ask whether every polytope with the generating property is always a strongly monotypic polytope.  It was shown to be the case for $\mathbb R^3$ in \cite{mcmullen1974monotypic}. In fact, the authors of \cite{mcmullen1974monotypic} conjectured that it holds for every dimension. In this article, we show that it is indeed the case.
\begin{theorem} \label{thm:generating-strmono}
	Every polytope with the generating property is strongly monotypic.
\end{theorem}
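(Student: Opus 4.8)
The plan is to translate the metric hypothesis (intersections of translates are summands) into a combinatorial statement about the arrangement of facet hyperplanes, and then to detect any failure of strong monotypy with a single, carefully chosen intersection. Throughout, write $U=\{u_1,\dots,u_m\}$ for the common facet normals and encode a polytope with these normals by its support vector $g=(g_1,\dots,g_m)$, so $P_g=\{x:\langle x,u_i\rangle\le g_i\text{ for all }i\}$, and let $h$ be the support vector of $P$ itself. Recall the standard deformation-cone fact: $P_g$ is a weak Minkowski summand of $P$ (equivalently, the normal fan of $P_g$ coarsens $\mathcal N(P)$) exactly when $g$ lies in the polyhedral cone $\mathcal T(P)$ cut out by the wall-crossing inequalities $\theta_e(g)\ge 0$, one for each edge $e$ of $P$, where $\theta_e$ is the linear form given by the unique linear relation among the $n+1$ normals meeting the two endpoints of $e$, signed so that $\theta_e(h)>0$. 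The elementary but decisive observation is that $P\cap(P+t)=P\ominus[0,-t]$ is the polytope with support vector $h^t_i=h_i+\min(0,\langle t,u_i\rangle)$. Hence, after invoking the reduction to pairs, the generating property is equivalent to the assertion that $h^t\in\mathcal T(P)$ for every $t$ for which $P\cap(P+t)$ is full-dimensional; that is, $\theta_e(h^t)\ge 0$ for all edges $e$ and all such $t$. (A small lemma is needed to pass from weak to genuine summand here, using that the complementary support vector $h-h^t$ also realizes an $\mathcal N(P)$-coarsening polytope.)

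Next I would reformulate strong monotypy. The combinatorial type of the arrangement $\mathcal A(g)=\{H_i(g)\}_i$, $H_i(g)=\{\langle x,u_i\rangle=g_i\}$, is governed by the signs of the concurrency forms $\Theta_{B,k}(g)=\sum_{i\in B}\mu_i g_i-g_k$, taken over all bases $B\subseteq[m]$ with $\{u_i\}_{i\in B}$ a basis and all $k\notin B$, where the $\mu_i$ are determined by $u_k=\sum_{i\in B}\mu_i u_i$; indeed $\Theta_{B,k}(g)=0$ is precisely the condition that the $n+1$ hyperplanes indexed by $B\cup\{k\}$ be concurrent (equivalently, $\langle v_B(g),u_k\rangle=g_k$ at the arrangement vertex $v_B(g)$). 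Thus $P$ is strongly monotypic iff every $\Theta_{B,k}$ has constant sign over the set of support vectors sharing the normal set $U$. The point is that the edge forms $\theta_e$ are exactly those $\Theta_{B,k}$ attached to genuine edges of $P$; strong monotypy demands sign-constancy of all concurrency forms, whereas generating only controls the edge forms, and only along the nonlinear ``erosion locus'' $\{h^t\}$.

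To bridge this gap I would argue by contraposition, first reducing to the case where $P$ is simple and monotypic. This reduction is itself powered by generating: an erosion $P\cap(P+t)$ that acquired a new vertex not obtained by contracting edges of $P$ would have a fan that does not coarsen $\mathcal N(P)$, hence would fail to be a summand; this forces simplicity and forces all $Q$ with normal set $U$ to share the normal fan $\mathcal N(P)$, so the valid support vectors fill the open cone $\operatorname{int}\mathcal T(P)$. Assuming now that $P$ is not strongly monotypic, some $\Theta_{B,k}$ changes sign on $\operatorname{int}\mathcal T(P)$, so at a support vector $g^\ast$ on the wall $\Theta_{B,k}=0$ the hyperplanes $B\cup\{k\}$ meet at a point $p^\ast$ that is \emph{not} a vertex of $P_{g^\ast}$. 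The plan is then to manufacture a translation vector $t$ whose erosion turns $p^\ast$ into a genuine non-coarsening vertex: using $\Theta_{B,k}(h^t)-\Theta_{B,k}(h)=\sum_{i\in B}\mu_i\min(0,\langle t,u_i\rangle)-\min(0,\langle t,u_k\rangle)$, one seeks $t$ with $\langle t,u_i\rangle<0$ on the facets of $B$ with $\mu_i>0$ and $\langle t,u_k\rangle\ge 0$, driving $\Theta_{B,k}(h^t)<0$ while the facets in $B\cup\{k\}$ remain active near $p^\ast$. Such an $h^t$ lies outside $\mathcal T(P)$, so $P\cap(P+t)$ is not a summand, contradicting generating.

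The main obstacle is exactly this realizability step. The achievable perturbations $\delta_i=\min(0,\langle t,u_i\rangle)$ are severely constrained -- all nonpositive and coupled through the single vector $t$ -- while the offending basis $B$ may pin an arrangement vertex $p^\ast$ far outside $P$; one must prove that a genuine sign change of $\Theta_{B,k}$ across the valid cone always admits a feasible $t$, and one must control the side effects of erosion (other facets moving inward) so that precisely the intended $n+1$ facets, and no spurious ones, bound $p^\ast$, and so that no competing edge-contraction ``absorbs'' the degeneracy before it is exposed. The degenerate sub-case $u_k\in\spn\{u_i:i\in B\}$ in which the concurrency is forced by the normals independently of $g$ must be separated out, since it never signals a failure. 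Packaging the three ingredients -- simplicity, the common normal fan, and the realizability estimate -- into a single condition on $U$ and $\mathcal N(P)$ is where the real work lies, and this is presumably the criterion for monotypy and strong monotypy announced in the abstract.
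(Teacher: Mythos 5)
There is a genuine gap, and in fact the central mechanism you propose cannot work. Your plan is to contradict the generating property by producing a translate $t$ whose erosion vector $h^t$ leaves the deformation cone $\mathcal T(P)$, i.e.\ by making $P\cap(P+t)$ fail to be a \emph{weak} summand (fan not coarsening $\mathcal N(P)$). But you have already reduced to the case where $P$ is monotypic, and for a monotypic polytope the intersection of any two translates is automatically homothetic to a Minkowski summand of $P$ --- so its normal fan always coarsens $\mathcal N(P)$ and its (true) support vector always lies in $\mathcal T(P)$. Consistently with this, the sign-changing concurrency form $\Theta_{B,k}$ in the monotypic-but-not-strongly-monotypic case is necessarily a \emph{non-edge} form (the edge forms are by definition positive on the interior of $\mathcal T(P)$), and driving a non-edge form negative does not take $h^t$ out of $\mathcal T(P)$; moreover $h^t$ is only an $H$-description vector, not necessarily the support vector of the intersection, so its sign data need not reflect the polytope at all. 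The failure of the generating property here is \emph{metric}, not combinatorial: the intersection is homothetic to a summand but is not itself a summand because one of its faces is strictly \emph{larger} than the corresponding face of $P$ (in Shephard's criterion, an edge-length inequality is violated, not a fan-coarsening condition). This is exactly what the paper constructs: the witness face $H_{\epsilon_0}$ of $P\cap(P+t)$ contains a translate of the corresponding face $H$ of $P$ in its relative interior.

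Separately, the realizability step that you correctly identify as ``where the real work lies'' is left entirely open, and it is the whole content of the paper's Section~3. The paper gets it by first replacing the type-cone formalism with the conical-position criteria (Conditions $D$ and $DD$): monotypic but not strongly monotypic means there are $n+1$ normals in conical position whose positive hull contains a further normal. Taking such a configuration with minimal positive hull forces a very rigid structure (Proposition~\ref{prop:positional-normals}): exactly one extra normal $p$, and a partition $Y_0\sqcup Y_1\sqcup Y_2$ making the facet $F$ with normal $p$ have a face $G$ that is a product of two simplices $S_1\times S_2$. The feasible translation is then found not by an abstract perturbation argument but by taking $t$ in the span of $\pi(Y_2)$ so that $S_2(\epsilon_0)$ and its translate meet at a single vertex, while the monotone growth of the slices $S_i(\epsilon)$ (Proposition~\ref{prop:compare-simplices}, via the acute angles with $p$) guarantees the surviving face is a translate of $S_1(\epsilon_0)\supsetneq S_1$. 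None of this is recoverable from the sketch as written; to repair your approach you would at minimum have to replace ``$h^t\notin\mathcal T(P)$'' by a violation of the edge-length (summand) inequalities and then still prove the realizability of $t$, which amounts to redoing the paper's structural analysis.
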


It follows that strong monotypy and the generating property are the same notion for polytopes. In contrast to strong monotypy, the condition for monotypy is already shown to be necessary and sufficient in \cite{mcmullen1974monotypic}:  A polytope $P$ is monotypic if and only if the intersection of every two translates of $P$ is either empty or homothetic to a Minkowski summand of $P$. It means that we only need to prove the following equivalent form of Theorem \ref{thm:generating-strmono}.
\begin{theorem} 
	\label{thm:equivalent-generating-strmono}
	Every polytope that is monotypic but not strongly monotypic does not have the generating property.
\end{theorem}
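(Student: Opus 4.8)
The plan is to reduce the generating property to a system of sign conditions on the deformation cone of $P$, and then to match that system with the failure of strong monotypy. Write $P=\{x:\langle u_i,x\rangle\le h_i,\ i=1,\dots,m\}$ with facet normals $u_1,\dots,u_m$ and support vector $h=(h_i)$, and for $a\in\mathbb R^m$ let $P_a=\{x:\langle u_i,x\rangle\le a_i\}$. The starting point is the elementary identity
\[
P\cap(P+t)=P_{g(t)},\qquad g(t)_i=h_i+\min\{0,\langle t,u_i\rangle\},
\]
together with the standard description of the deformation (type) cone $\mathcal D=\{a:\ell_e(a)\ge 0\ \text{for every edge }e\}$, where each edge $e$ of the simple polytope $P$ contributes a linear edge-length functional $\ell_e$ arising from the circuit relation $\sum_{i\in C_e}\lambda^e_i u_i=0$ supported on the normals around $e$. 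For a simple polytope, $Q=P_a$ with $a\le h$ is a Minkowski summand of $P$ exactly when $a\in\mathcal D$ and $h-a\in\mathcal D$. Since $P$ is monotypic, the McMullen--Schneider--Shephard criterion quoted above guarantees that $P\cap(P+t)$ is always homothetic to a summand; as the functionals $\ell_e$ vanish on translations and scale linearly, any homothet of a summand again lies in $\mathcal D$, so $g(t)\in\mathcal D$ whenever the intersection is nonempty. Thus the only obstruction to $P\cap(P+t)$ being an honest summand is the second membership $h-g(t)\in\mathcal D$.

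Next I would make this obstruction explicit. Using the circuit relation together with $\bigl(h-g(t)\bigr)_i=\max\{0,-\langle t,u_i\rangle\}$, a one-line computation with $\max\{0,y\}=\tfrac12(y+|y|)$ collapses the linear term and gives
\[
\ell_e\bigl(h-g(t)\bigr)=\tfrac12\,D_e(t),\qquad D_e(t)=\sum_{i\in C_e}\lambda^e_i\,\lvert\langle t,u_i\rangle\rvert .
\]
Because $D_e$ is positively homogeneous of degree one in $t$ and $\{t:P\cap(P+t)\neq\emptyset\}$ contains a neighbourhood of the origin, rescaling shows that $P$ has the generating property if and only if $D_e(t)\ge 0$ for every edge $e$ and every $t\in\mathbb R^n$. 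Splitting $C_e$ according to the sign of $\lambda^e_i$, this is exactly a containment of two zonotopes, one spanned by the normals entering $\ell_e$ with positive coefficient and one with negative coefficient. Thus the generating property is reduced to a purely local, normal-fan-intrinsic positivity condition attached to the edges of $P$.

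It then remains to identify this condition with strong monotypy. The combinatorial type of the arrangement of the facet hyperplanes of $P_a$ is recorded by the signs of the circuit functionals $\phi_C(a)=\sum_{i\in C}\lambda^C_i a_i$ ranging over all circuits $C$ of the normal configuration; strong monotypy says that none of these changes sign as $a$ ranges over the support vectors of polytopes with the same normal set as $P$, whereas monotypy constrains only the edge circuits $C_e$. I would prove the criterion that, for a monotypic $P$, strong monotypy is equivalent to $D_e(t)\ge 0$ for all edges $e$ and all $t$. Granting this, if $P$ is monotypic but not strongly monotypic there is an edge $e$ and a vector $t_0$ with $D_e(t_0)<0$; replacing $t_0$ by $\varepsilon t_0$ for small $\varepsilon>0$ keeps the intersection nonempty and full-dimensional while preserving $D_e<0$, so $h-g(\varepsilon t_0)\notin\mathcal D$ and $P\cap(P+\varepsilon t_0)$ fails to be a summand. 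Hence $P$ does not have the generating property, which is Theorem~\ref{thm:equivalent-generating-strmono}.

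The main obstacle is exactly the criterion in the previous paragraph, and more precisely the single implication it still requires: not strongly monotypic $\Rightarrow$ some edge functional $D_e$ takes a negative value. A priori a non-edge circuit $C$ could become degenerate (its hyperplanes concurrent at a point outside $P$) while every edge functional stays nonnegative, and excluding this is essentially the whole content of the theorem, since the reverse implication strong monotypy $\Rightarrow$ generating is already known and, through the reduction above, already gives $D_e(t)\ge 0$. I expect to attack it by tracking the arrangement across the degeneracy: at the moment a circuit $C$ flips, an arrangement vertex crosses a facet hyperplane, and I would localize to the two-dimensional normal picture of that event to exhibit a genuine edge of $P$ whose associated zonotope containment is violated, thereby propagating the non-edge degeneracy to an edge circuit. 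Care is also needed to justify, via monotypy, that $g(t)$ always lands in $\mathcal D$ (so that only the $h-g(t)$ membership can fail), and to control unbounded cells and non-simple points of the arrangement at which several circuits degenerate simultaneously.
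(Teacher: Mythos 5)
Your reduction of the generating property to the sign conditions $D_e(t)\ge 0$ is a reasonable and essentially correct reformulation (modulo small points you should make explicit: monotypic polytopes are simple, so the type-cone description applies; and one must take $t$ small so that $g(t)$ really is the support vector of $P\cap(P+t)$ rather than merely an upper bound for it, since $\ell_e(h-a)<0$ for the true support vector $a$ does not follow from $\ell_e(h-g(t))<0$ when some inequalities are slack). But the proposal does not prove the theorem: the implication ``monotypic and not strongly monotypic $\Rightarrow$ some edge functional $D_e$ takes a negative value'' is exactly the content of Theorem~\ref{thm:equivalent-generating-strmono} translated into your notation, and you leave it as a plan. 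Your own criterion is, as you note, logically equivalent to the theorem (strong monotypy $\Rightarrow$ generating is known, and generating $\Leftrightarrow$ all $D_e\ge0$ by your reduction), so invoking it is circular until that implication is established.

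The sketched attack --- track an arrangement vertex crossing a hyperplane and ``localize to the two-dimensional normal picture'' --- underestimates the difficulty. The failure of strong monotypy is witnessed by $n+1$ normals in conical position (Condition $DD$), which is a circuit of the normal configuration that in general has nothing to do with any single edge of $P$, and monotypy forces extra normals inside its positive hull, so the degenerating circuit is not isolated. The paper has to work for this: it chooses the witnessing set $X$ with \emph{minimal} positive hull, proves a structural result (Proposition~\ref{prop:positional-normals}) showing that exactly one further normal $p$ lies in $\conv Y$ and that $Y$ splits as $Y_0\sqcup Y_1\sqcup Y_2$ with two simplex factors, identifies a face $G=\{x^*\}\times S_1\times S_2$ of the facet $F_p$ (Proposition~\ref{prop:G-face}), and then compares the cross-sections $S_i(\epsilon)$ (Proposition~\ref{prop:compare-simplices}) to produce a translation $t$ for which a face of $P\cap(P+t)$ strictly contains a translate of the corresponding face of $P$. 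None of this is visible in a two-dimensional picture of a single wall-crossing, and your proposal gives no substitute for it. So the gap is the central one: the construction of the bad translation vector $t$ (equivalently, of the edge $e$ and direction $t$ with $D_e(t)<0$) from the mere existence of $n+1$ normals in conical position is missing.
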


The proof of Theorem \ref{thm:generating-strmono} in Section \ref{sec:generating-strmono} is done by proving Theorem \ref{thm:equivalent-generating-strmono}. In order to do that, we need a more convenient description of monotypic and strongly monotypic polytopes. Monotypic polytopes and strongly monotopic polytopes $P$ can be recognized by the set of normals $N(P)$.\footnote{Note that the normal vectors in the article are always outward vectors that are normalized to be unit vectors by default. Sometimes it is more convenient to have them lie on a hyperplane, but it will be done explicitly. Moreover, the article deals with finite sets of normal vectors only.} In \cite{mcmullen1974monotypic}, several equivalent versions of the necessary and sufficient conditions are given. The following theorems are among them.
\begin{theorem}[Condition $M3'$ of {\cite[Theorem $1$]{mcmullen1974monotypic}}]
	\label{thm:empty-intersection}
	The monotypy of a polytope $P$ is equivalent to: If $V_1$ and $V_2$ are disjoint primitive subsets of $N(P)$ then $\pos V_1\cap\pos V_2=\{0\}$.
\end{theorem}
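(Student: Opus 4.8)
The plan is to prove both implications by parametrising the polytopes sharing the normal set $U=N(P)$ through their support numbers and tracking how the face lattice depends on that parametrisation. For $h\colon U\to\mathbb R$ write $P(h)=\{x:\langle x,u\rangle\le h(u)\text{ for all }u\in U\}$, and restrict attention to those $h$ for which every $u\in U$ is genuinely the outer normal of a facet; monotypy then means that $P(h)$ and $P(h')$ are combinatorially equivalent for every such pair $h,h'$, equivalently that the normal fans $\mathcal N(h)$ and $\mathcal N(h')$ are isomorphic as fans with common ray set $U$. The combinatorics of $P(h)$ is encoded by which subsets $W\subseteq U$ index a nonempty face $F_W(h)=\{x\in P(h):\langle x,u\rangle=h(u)\ \forall u\in W\}$, so the whole theorem reduces to controlling, as $h$ varies, the family of \emph{feasible} subsets $W$.

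For the direction that monotypy forces the condition I would argue by contraposition. Suppose $V_1,V_2$ are disjoint primitive subsets with a common nonzero vector $w\in\pos V_1\cap\pos V_2$, so that $w=\sum_{v\in V_1}\alpha_v v=\sum_{v\in V_2}\beta_v v$ with nonnegative coefficients. This single linear relation among normals drawn from two disjoint blocks is precisely the data needed to perturb a fixed support vector $h$ within the set of valid heights: moving the heights on $V_1\cup V_2$ along the direction dictated by the coefficients $\alpha_v,\beta_v$ keeps $U$ as the facet-normal set but makes a certain face appear on one side of the perturbation and vanish on the other. Concretely, I expect to exhibit a subset $W$ feasible for $h_{+\varepsilon}$ but infeasible for $h_{-\varepsilon}$, so that $P(h_{+\varepsilon})$ and $P(h_{-\varepsilon})$ have different face lattices and $P$ is not monotypic. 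Checking that both polytopes really retain the full normal set $U$ and are genuinely non-isomorphic, rather than merely differently embedded, is the bookkeeping to be done here.

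For the converse I would show that the condition makes the feasible family of subsets, and hence the fan, independent of $h$. The mechanism is that a combinatorial change between $P(h)$ and $P(h')$ can be realised along a path in height space, and at the wall where the type changes some face degenerates; such a degeneracy produces a linear dependence $\sum_{u\in S}\gamma_u u=0$ supported on the normals that collide, which one decomposes into primitive pieces lying in two disjoint blocks whose positive hulls must then share the collision direction. The hypothesis $\pos V_1\cap\pos V_2=\{0\}$ for disjoint primitive $V_1,V_2$ forbids exactly this, so no wall can be crossed and every feasible $W$ for $h$ remains feasible for $h'$; by symmetry the feasible families, and thus the face lattices, coincide and $P$ is monotypic.

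The step I expect to be the real obstacle is the converse, and specifically the claim that every combinatorial transition is witnessed by a dependence that splits into two disjoint primitive blocks with a shared positive-hull direction: one must be sure that the collision set $S$ is rich enough to contain two disjoint primitive subsets, yet organised so that the shared direction genuinely lies in both positive hulls, and that no lower-dimensional or boundary coincidence slips through undetected. I would handle this by a careful local analysis of the normal fan near a degenerating face, reducing to the generic codimension-one wall and invoking the minimality built into primitive sets to pin down the relevant supports.
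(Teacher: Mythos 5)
First, note that the paper does not prove this statement at all: it is Condition $M3'$ of Theorem~1 of \cite{mcmullen1974monotypic}, imported as a black box, so there is no in-paper proof to compare yours against. Judged on its own terms, what you have written is a programme rather than a proof, and the two places you yourself flag as ``bookkeeping'' and ``the real obstacle'' are precisely where the entire content of the theorem lives. In the forward direction you assert that the relation $\sum_{v\in V_1}\alpha_v v=\sum_{v\in V_2}\beta_v v$ lets you perturb the heights so that some index set $W$ is feasible on one side of the perturbation and not on the other, but you never say which $W$, nor where the primitivity of $V_1$ and $V_2$ enters. It must enter somewhere: without the condition $\pos V_i\cap N(P)=V_i$ the faces $F_{V_1}(h)$ and $F_{V_2}(h)$ need not be nonempty faces of the expected dimension (this is exactly Property $M4$, Theorem~\ref{thm:facets-intersecting} of the paper), and the perturbation argument collapses. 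The natural route is to normalise, as in Lemma~\ref{lem:ray_as_intersection}, so that the common ray $w$ lies in the relative interior of both $\pos V_1$ and $\pos V_2$; then $w$ lies in the relative interior of the normal cones of two faces which a generic height perturbation forces to be distinct, and that is the contradiction. None of this is in your text.

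The converse is in worse shape. Your claim that every combinatorial transition along a path in height space is witnessed by a linear dependence that ``decomposes into primitive pieces lying in two disjoint blocks whose positive hulls share the collision direction'' is not a lemma you can defer: it essentially \emph{is} the theorem. A degenerating face does give a positive dependence $\sum_{u\in S}\gamma_u u$ supported on the colliding normals, but extracting from $S$ two \emph{disjoint} subsets that are each \emph{primitive} (linearly independent \emph{and} with positive hull meeting $N(P)$ only in themselves) and whose positive hulls share a nonzero vector requires an argument of exactly the kind the paper runs for Proposition~\ref{prop:D->M3'}: a Radon--Carath\'eodory reduction combined with a minimality induction on the positive hull of the union, as in Lemma~\ref{lem:ray_as_intersection}. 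As it stands, the proposal identifies a legitimate framework (chambers and walls in the space of support numbers) but proves neither implication; both directions stop at the point where the real work begins.
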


Here, $V$ is a \emph{primitive} subset of $N(P)$ if $V$ is linearly independent and $\pos V\cap N(P)=V$. The latter condition can be understood as the positive hull of $V$ does not contain any other normal than those in $V$. Such a situation is encountered several times throughout the text.
\begin{theorem}[Condition $S4'$ of {\cite[Theorem $2$]{mcmullen1974monotypic}}]
	\label{thm:subset-monotypic}
	The strong monotypy of a polytope $P$ is equivalent to: If $Q$ is any polytope with $N(Q)\subseteq N(P)$ then $Q$ is monotypic.
\end{theorem}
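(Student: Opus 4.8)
The plan is to characterize the combinatorial type of the arrangement $\mathcal A(Q)$ of facet hyperplanes by the pattern of \emph{concurrencies} it exhibits, and to match each concurrency with a linear dependence among normals of exactly the kind detected by Condition $M3'$ (Theorem~\ref{thm:empty-intersection}). Fix the normal set $N=N(P)$ and regard a polytope with this normal set as a point $b=(b_u)_{u\in N}$ in \emph{offset space}, where the hyperplanes are $H_u=\{x:\langle u,x\rangle=b_u\}$. Since $\pos N=\mathbb R^n$, every $b$ yields a bounded arrangement, and the combinatorial type of $\mathcal A(b)$ is constant on the open cells of a polyhedral subdivision of offset space whose walls are precisely the loci where some circuit $W\subseteq N$ of the vector configuration becomes \emph{active}: writing its dependence as $\sum_{u\in W}\lambda_u u=0$, the hyperplanes $\{H_u:u\in W\}$ acquire a common point exactly when $\sum_{u\in W}\lambda_u b_u=0$. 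I would prove both implications by crossing, or by avoiding, such walls.

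For the implication that sub-polytope monotypy forces strong monotypy, I argue the contrapositive. If $P$ is not strongly monotypic, pick two offset vectors $b^{(0)},b^{(1)}$ with $N(b^{(0)})=N(b^{(1)})=N$ whose arrangements are combinatorially inequivalent, and travel along the segment $b_t=(1-t)b^{(0)}+tb^{(1)}$, which stays bounded throughout. At the first value $t_0$ where the type changes, some circuit $W$ becomes active; splitting its dependence by sign gives disjoint sets $V_1=\{u:\lambda_u>0\}$ and $V_2=\{u:\lambda_u<0\}$, each linearly independent by minimality of the circuit, with a common value $z=\sum_{V_1}\lambda_u u=\sum_{V_2}|\lambda_u|\,u$. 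Minimality also forces $z\neq 0$, so $z\in\pos V_1\cap\pos V_2\setminus\{0\}$. It then remains to exhibit a genuine polytope $Q$ with $N(Q)\subseteq N$ in which $V_1,V_2$ are \emph{primitive}; Theorem~\ref{thm:empty-intersection} would then certify that $Q$ is not monotypic, completing the contrapositive.

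For the converse I start from a non-monotypic $Q$ with $N(Q)\subseteq N$, apply Theorem~\ref{thm:empty-intersection} to obtain disjoint primitive $V_1,V_2\subseteq N(Q)$ with $\pos V_1\cap\pos V_2\neq\{0\}$, and build two polytopes $P_0,P_1$ with $N(P_0)=N(P_1)=N$ lying on opposite sides of the wall attached to the dependence carried by $V_1\cup V_2$. Concretely, I would choose the offsets of the hyperplanes with normals in $V_1\cup V_2$ to realize, respectively, the presence and the absence of the concurrent face, and push every remaining hyperplane far outward so that it still contributes a facet of a bounded polytope but does not interact with the localized feature; the two arrangements then differ at that feature. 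The main obstacle in \emph{both} directions is the realizability bookkeeping: turning a bare dependence into primitive subsets of an honest normal set that positively spans $\mathbb R^n$ (absorbing into the $V_i$ any stray normal of $N$ that falls inside $\pos V_1\cup\pos V_2$, and supplying the missing spanning directions from normals of $N$ lying outside them), keeping every normal active as a facet, and confirming that crossing the wall genuinely alters the combinatorial type rather than merely the metric data. This localization step — isolating the degenerate face and certifying that it alone is responsible for the change — is the technical heart of the proof.
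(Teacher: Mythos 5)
First, a point of reference: the paper does not prove this statement at all --- it is quoted verbatim as Condition $S4'$ of \cite[Theorem 2]{mcmullen1974monotypic} and used as a black box (e.g.\ in the proof of Theorem \ref{thm:strongly-monotypic-description}). So there is no in-paper argument to compare yours against, and your proposal has to be judged on its own. As it stands it is a strategy sketch rather than a proof, and you say so yourself; the trouble is that the steps you defer are not ``bookkeeping'' but the actual content. The load-bearing claim is that the labelled face lattice of the arrangement $\mathcal A(b)$ is locally constant off the circuit walls $\{b:\sum_{u\in W}\lambda_u b_u=0\}$ and genuinely changes exactly when such a wall is crossed. Neither half is established, and both fail as stated. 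For the ``only if'' half: a circuit all of whose coefficients have one sign (a positive circuit, which exists whenever $0\in\relint\pos W$ for a minimal dependent $W$) gives $V_2=\emptyset$ under your sign split, so a type change first witnessed at such a wall produces no pair $(V_1,V_2)$ to feed into Theorem \ref{thm:empty-intersection}; you would need to argue that positive-circuit walls never change the labelled type (plausible --- for three concurrent lines the two sides are related by a reflection --- but this is precisely a claim requiring proof in general). For the ``if'' half: being on a wall is a codimension-one event, so the types on the two sides of a wall may well coincide, and you must actually verify that your two constructed arrangements are combinatorially inequivalent \emph{under the identity labelling}, not merely that they sit on different sides of a linear condition.

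The converse direction has a more concrete defect: ``push every remaining hyperplane far outward so that it still contributes a facet'' is self-contradictory. Translating a supporting hyperplane outward makes it redundant, i.e.\ removes its normal from $N(P_0)$; keeping $N(P_0)=N(P_1)=N(P)$ while isolating the degenerate feature is exactly the hard realizability step, and the prescription as written destroys the hypothesis $N(P_i)=N(P)$ that strong monotypy quantifies over. You also need to handle the normals of $N(P)\setminus N(Q)$ that land inside $\pos V_1\cup\pos V_2$, whose facet hyperplanes can pass through the would-be concurrency and wash out the distinction. By contrast, the one step you defer in the first direction is actually the easy one: given the sign split $V_1,V_2$ of a genuine (mixed-sign) circuit, set $N(Q)=N(P)\setminus\bigl((\pos V_1\cup\pos V_2)\setminus(V_1\cup V_2)\bigr)$; then $\pos N(Q)\supseteq\pos N(P)=\mathbb R^n$ because the deleted normals lie in $\pos(V_1\cup V_2)$, so $Q$ exists, and $V_1,V_2$ are disjoint primitive subsets of $N(Q)$ with $\pos V_1\cap\pos V_2\ne\{0\}$ --- this is the same deletion move the paper uses in its proof of Theorem \ref{thm:strongly-monotypic-description}. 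So the proposal is not wrong in spirit, but the wall/type dictionary and the realizability of the two full-normal-set witnesses are unproven, and until they are, neither implication is complete.
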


We give other equivalent conditions $D$ and $DD$ as follows.
\begin{theorem}[Condition $D$] \label{thm:monotypic-description}
		The monotypy of an $n$-dimensional polytope $P$ is equivalent to: If some $n+1$ normals of $P$ are in conical position, then their positive hull contains another normal of $P$.
\end{theorem}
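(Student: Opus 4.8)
The plan is to prove that condition $D$ is equivalent to condition $M3'$ of Theorem~\ref{thm:empty-intersection}, which already characterises monotypy; I will use throughout that a set $S$ of $n+1$ normals in conical position is conically independent and lies in a pointed cone, so that $S$ is linearly dependent and every linear dependence among its members has coefficients of both signs. The statement to be proved is thus the equivalence $D\Leftrightarrow M3'$, and I treat the two implications separately, both in contrapositive form.

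First I would deduce $D$ from $M3'$ by turning a failure of $D$ into a failure of $M3'$. Suppose $S$ is in conical position and $\pos S$ meets $N(P)$ only in the $n+1$ members of $S$. Pick a dependence of minimal support among the vectors of $S$ (a circuit) and let $V_1$ and $V_2$ be the sets carrying its positive and its negative coefficients. Each $V_i$ is a proper subset of a circuit, hence linearly independent, and the circuit relation gives a common vector $w\in\pos V_1\cap\pos V_2$ with $w\neq 0$ (were $w=0$, one of the $V_i$ would be positively, hence linearly, dependent). Since $S$ is in conical position no member of $S$ lies in the positive hull of any subset of the others, so $\pos V_i\cap S=V_i$; combined with $\pos V_i\subseteq\pos S$ and $\pos S\cap N(P)=S$ this yields $\pos V_i\cap N(P)=V_i$, i.e.\ $V_1,V_2$ are primitive. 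Thus $V_1,V_2$ are disjoint primitive sets with $\pos V_1\cap\pos V_2\neq\{0\}$, contradicting $M3'$.

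The substantial implication is $D\Rightarrow M3'$, again contrapositively. Starting from disjoint primitive $V_1,V_2$ and $w\in\pos V_1\cap\pos V_2$ with $w\neq 0$, I pass to minimal representations $w=\sum_{v\in A}a_v v=\sum_{u\in B}b_u u$ with $A\subseteq V_1$ and $B\subseteq V_2$, so that $C=A\cup B$ is a circuit. A short argument shows $|A|,|B|\geq 2$: if $A=\{v\}$ then $v\in\pos B\subseteq\pos V_2$, whereas $v\in N(P)\setminus V_2$ and $\pos V_2\cap N(P)=V_2$, a contradiction. Consequently no vector of $C$ lies in the positive hull of the remaining ones and $C$ is pointed, so $C$ is in conical position. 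It remains to force the positive hull to be free of further normals and to reach the prescribed size $n+1$. For the first I would induct on the number of normals lying in $\pos C$: a normal $v\in\pos C\cap N(P)$ lies in neither $\pos A$ nor $\pos B$ by primitivity, so eliminating one member of $C$ between the relation for $v$ and the circuit relation produces a bad pair sitting inside a strictly smaller cone, and iterating empties the cone.

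I expect the main obstacle to be exactly this last implication, and within it the simultaneous control of two competing requirements: the positive hull must contain no other normal, yet the configuration must have the full cardinality $n+1$. Clearing normals out of a cone is aided by shrinking it, while enlarging the configuration towards $n+1$ vectors (by adjoining further normals of $P$ outside the current span) tends to enlarge the cone and admit new normals. The delicate point, which I would address by an induction on the dimension of $\spn C$, is to interleave the reduction and the enlargement so that conical position and the emptiness of the positive hull are preserved at every stage and the process terminates with exactly $n+1$ normals.
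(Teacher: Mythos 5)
Your first implication ($M3'\Rightarrow D$) is correct and is essentially the paper's Proposition~\ref{prop:M3'->D} in circuit language: where the paper projects the $n+1$ normals onto a hyperplane and applies Radon's theorem, you take a minimal-support linear dependence and split it by sign; the resulting disjoint primitive pair is the same object, and your verification of primitivity is sound. Your observation in the converse direction that the circuit $C=A\cup B$ with $|A|,|B|\ge 2$ is automatically pointed and in conical position (because the kernel on a circuit is one-dimensional) is also correct, and is a slightly cleaner route to part of what Lemma~\ref{lem:ray_as_intersection} establishes.

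The gap is in $D\Rightarrow M3'$, exactly where you predicted trouble, and the step that fails is the cone-emptying. Given a normal $v\in(\pos C\cap N(P))\setminus C$, writing $v=\sum_{c\in C}\mu_c c$ and subtracting the right multiple of the circuit relation only yields $v\in\pos(C\setminus\{c^*\})$ for some $c^*$. Since $C\setminus\{c^*\}$ is linearly independent, the unique dependence on $\{v\}\cup(C\setminus\{c^*\})$ has $v$ as its entire positive (or entire negative) part; by your own $|A|,|B|\ge 2$ argument, a configuration with a singleton on one side is not a bad pair and is not in conical position, so the elimination does not ``produce a bad pair sitting inside a strictly smaller cone'' and the iteration has nothing to act on. You also have not shown strict shrinkage (one would need $c^*\notin\pos(\{v\}\cup C\setminus\{c^*\})$) or termination. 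The missing idea is the one in Proposition~\ref{prop:D->M3'}: take a bad pair with $\pos(V_1\cup V_2)$ minimal by inclusion over \emph{all} bad pairs, normalize via Lemma~\ref{lem:ray_as_intersection} so that the projected sets $T_1,T_2$ are vertex sets of simplices with $0$ in their relative interiors and with independent (hence orthogonalizable) spans, and then use an extra normal $p$ in the cone chosen \emph{closest to} $\conv T_1$; projecting $p$ into $\spn T_1$ selects a proper subset $T_1'\subset T_1$ with $0\in\conv(\{p'\}\cup T_1')$, so that $\{p\}\cup T_1'$ against $T_2$ is again a bad pair with strictly smaller cone, contradicting minimality. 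Your second worry, interleaving the emptying with growth to $n+1$ points, is the easier one and is disposed of by Lemma~\ref{lem:subspace_causes_contradiction}: empty first, then adjoin points outside the current span one at a time, re-clearing each enlarged cone, which terminates because the number of normals inside the cone strictly decreases.
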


In this text, a set of points is said to be separated from $0$ if there is a hyperplane strictly separating the set from $0$. Also, some points are said to be in \emph{conical position} if they are separated from $0$ and none of the points is in the positive hull of the others.

\begin{theorem}[Condition $DD$] \label{thm:strongly-monotypic-description}
		The strong monotypy of an $n$-dimensional polytope $P$ is equivalent to: Every $n+1$ normals of $P$ are not in conical position.
\end{theorem}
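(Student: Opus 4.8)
The plan is to deduce Condition $DD$ by combining the two characterizations already at hand: $S4'$ (Theorem~\ref{thm:subset-monotypic}), which reduces strong monotypy to the monotypy of all polytopes $Q$ with $N(Q)\subseteq N(P)$, and $D$ (Theorem~\ref{thm:monotypic-description}), which detects monotypy through $n+1$ normals in conical position. Throughout I treat monotypy and strong monotypy as properties of the finite positively spanning set $N(P)$ of unit vectors, and I use the standard fact (Minkowski's existence theorem) that a finite set of unit vectors occurs as the facet-normal set of some full-dimensional polytope precisely when it positively spans $\mathbb{R}^n$. I prove the two implications separately, keeping in mind the gnomonic picture: since a set in conical position lies in an open halfspace $\{\ell>0\}$, normalizing each vector $v$ to $v/\ell(v)$ sends $\pos$-containment to $\conv$-containment, so ``$n+1$ normals in conical position'' means exactly ``$n+1$ points in convex position in $\mathbb{R}^{n-1}$''.

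For the direction that the absence of $n+1$ normals in conical position forces strong monotypy, I would take an arbitrary full-dimensional polytope $Q$ with $N(Q)\subseteq N(P)$. Any $n+1$ normals of $Q$ are in particular $n+1$ normals of $P$, hence by hypothesis not in conical position. Thus the premise of Condition $D$ is vacuously satisfied for $Q$, so $Q$ is monotypic; since $Q$ was arbitrary, $S4'$ yields that $P$ is strongly monotypic.

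For the converse I argue contrapositively: assuming some normals $v_1,\dots,v_{n+1}$ of $P$ are in conical position, I construct a non-monotypic $Q$ with $N(Q)\subseteq N(P)$. Writing $C=\pos\{v_1,\dots,v_{n+1}\}$, I set $N(Q)=\bigl(N(P)\setminus C\bigr)\cup\{v_1,\dots,v_{n+1}\}$, deleting every normal of $P$ lying in the cone $C$ except the $v_i$ themselves. The crucial observation is that this deletion does not shrink the positive hull: every deleted normal lies in $C=\pos\{v_1,\dots,v_{n+1}\}\subseteq\pos N(Q)$, so $\pos N(Q)=\pos N(P)=\mathbb{R}^n$ and $N(Q)$ still positively spans. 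Hence $N(Q)$ is realized by some full-dimensional polytope $Q$. By construction $C\cap N(Q)=\{v_1,\dots,v_{n+1}\}$, so these $n+1$ normals of $Q$ are in conical position yet their positive hull contains no further normal of $Q$; Condition $D$ then certifies that $Q$ is not monotypic, and $S4'$ gives that $P$ is not strongly monotypic.

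The step I expect to require the most care is this converse construction, specifically verifying that removing all normals interior to the cone $C$ leaves a set that is still positively spanning and still realizable as a genuine polytope with all prescribed facets present; the containment of the deleted normals in $\pos\{v_1,\dots,v_{n+1}\}$ is exactly what rescues positive spanning, and Minkowski's existence theorem (applied to positive facet weights $f_u$ with $\sum_u f_u\,u=0$, which exist because $0\in\inte\conv N(Q)$) supplies the polytope. A secondary point to pin down is the convention that the polytopes $Q$ in $S4'$ are full-dimensional, so that Condition $D$, stated for $n$-dimensional polytopes, applies verbatim to each $Q$.
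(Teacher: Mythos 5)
Your proposal is correct and follows essentially the same route as the paper: one direction is the observation that Condition $D$ holds vacuously for every $Q$ with $N(Q)\subseteq N(P)$, and the other is the construction $N(Q)=N(P)\setminus\bigl((\pos V)\setminus V\bigr)$ combined with $S4'$ and $D$. Your extra verification that the pruned normal set still positively spans and is realizable via Minkowski's existence theorem is a detail the paper leaves implicit, but it does not change the argument.
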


The equivalences in Theorem \ref{thm:monotypic-description} and Theorem \ref{thm:strongly-monotypic-description} are verified in Section \ref{sec:equivalent-characterization} using Theorem \ref{thm:empty-intersection} and Theorem \ref{thm:subset-monotypic}. Theorem \ref{thm:generating-strmono} is proved in Section \ref{sec:generating-strmono} using Theorem \ref{thm:monotypic-description} and Theorem \ref{thm:strongly-monotypic-description}.

Before closing the introduction, we illustrate in Fig. \ref{fig:monotypic-intersecting} a section of a polytope $P$ in $\mathbb R^3$ that is monotypic but not strongly monotypic (in black color) and a translate (in blue color) so that their intersection has the top most face being an edge (in red color) that is longer than the corresponding face of $P$. It means their intersection is not a Minkowski summand of $P$. In other words, $P$ does not have the generating property.

The example actually provides an idea for a proof of Theorem \ref{thm:generating-strmono} in $\mathbb R^3$, where a polytope that is monotypic but not strongly monotypic must have four normals in conical position with only one other normal in the positive hull of the four normals. The facet of this only other normal is a parallelogram. We here only sketch the approach, while the proof in Section \ref{sec:generating-strmono} provides more details with a generalization for higher dimensions, where there are some unpleasant situations that are not so nice as two parallelograms intersecting at an edge.

\begin{figure}[ht]
	\includegraphics[width=0.25\textwidth]{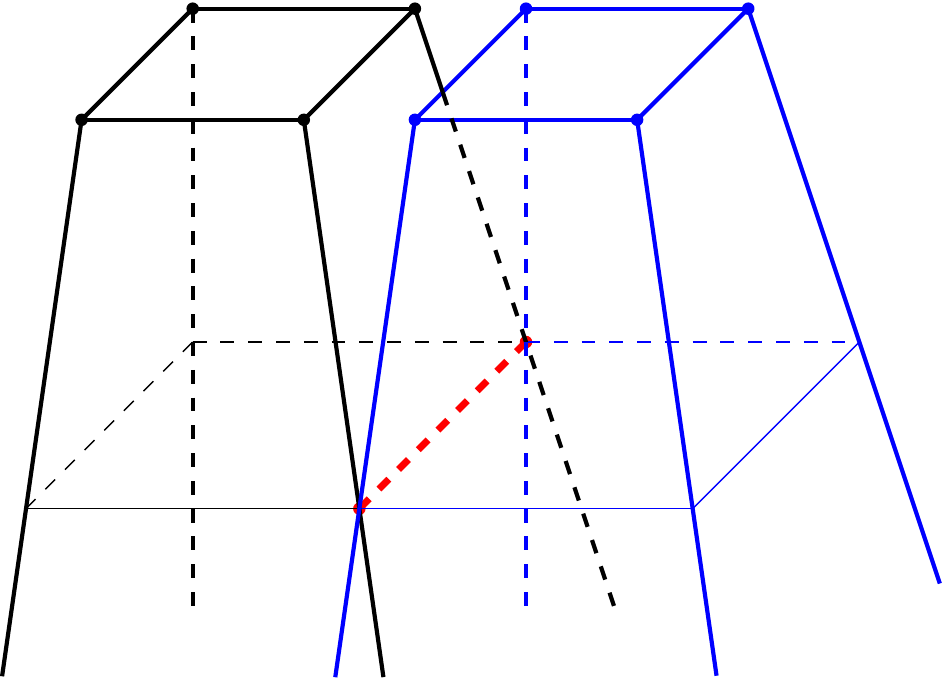}
	\caption{Part of a monotypic but not strongly monotypic polytope without the generating property}
	\label{fig:monotypic-intersecting}
\end{figure}

\section{Equivalence of the characterizations}
\label{sec:equivalent-characterization}
In order to prove Theorem
\ref{thm:monotypic-description}, we will show that the two conditions
$M3'$ and $D$ are equivalent by verifying both directions.

\begin{proposition} \label{prop:M3'->D}
	$M3'$ implies $D$.
\end{proposition}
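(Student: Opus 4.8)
The plan is to prove the contrapositive: assuming $D$ fails, I would produce two disjoint primitive subsets of $N(P)$ whose positive hulls meet outside the origin, thereby violating $M3'$. So suppose there are $n+1$ normals $w_0,\dots,w_n$ in conical position whose positive hull contains no normal of $P$ other than $w_0,\dots,w_n$ themselves. By the definition of conical position there is a vector $u$ with $\langle u,w_i\rangle>0$ for every $i$, and since $n+1$ vectors in $\mathbb R^n$ must be linearly dependent there is a nontrivial relation $\sum_i\lambda_i w_i=0$. Applying $\langle u,\cdot\rangle$ to this relation forces the coefficients to have mixed signs, so writing $I^+=\{i:\lambda_i>0\}$ and $I^-=\{i:\lambda_i<0\}$ gives two nonempty disjoint index sets with
\[
p:=\sum_{i\in I^+}\lambda_i w_i=\sum_{i\in I^-}(-\lambda_i)w_i .
\]
The same application of $u$ shows $p\neq 0$, so $p$ is a common nonzero point of $\pos\{w_i:i\in I^+\}$ and $\pos\{w_i:i\in I^-\}$.

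Next I would cut these two index sets down to primitive subsets. By Carath\'eodory's theorem for cones, $p$ lies in the positive hull of a linearly independent subset $V_1\subseteq\{w_i:i\in I^+\}$, and likewise $p\in\pos V_2$ for a linearly independent $V_2\subseteq\{w_i:i\in I^-\}$. Being subsets of disjoint sets, $V_1$ and $V_2$ are disjoint, and each is linearly independent by construction, so it remains only to check the second requirement of primitivity, namely $\pos V_k\cap N(P)=V_k$. Here I use both standing hypotheses: any normal in $\pos V_k\subseteq\pos\{w_0,\dots,w_n\}$ must be one of $w_0,\dots,w_n$ because that positive hull contains no other normal; and it cannot be a $w_j$ outside $V_k$, since such a $w_j$ would then lie in the positive hull of the remaining $w_i$, contradicting conical position. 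Hence $\pos V_k\cap N(P)=V_k$, so $V_1$ and $V_2$ are primitive.

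Finally, $p\in\pos V_1\cap\pos V_2$ with $p\neq 0$ contradicts $M3'$, completing the contrapositive. The routine ingredients are the separation inequality and Carath\'eodory's theorem; the step I expect to require the most care is the verification of primitivity, because it is exactly there that conical position does real work---ruling out that a discarded normal $w_j$ sneaks into $\pos V_k$---rather than merely the weaker fact that the full positive hull of the $n+1$ vectors omits the other normals.
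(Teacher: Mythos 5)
Your proof is correct and follows essentially the same route as the paper: both arguments split the $n+1$ normals into two disjoint subsets whose positive hulls share a nonzero point (you do this by taking a linear dependence and separating coefficients by sign, which is exactly the conical form of the Radon step the paper performs on a hyperplane cross-section), then refine to linearly independent subsets via Carath\'eodory and check primitivity using conical position. If anything, your verification of primitivity is spelled out more carefully than the paper's one-line assertion, but the underlying argument is the same.
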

\begin{proof}
	Suppose we do not have $D$, which means there exist $n+1$ normals in conical position with the positive hull not containing any other normal. Let $H$ be a hyperplane not through $0$ such that for each $a$ of the $n+1$ normals, ray $\overrightarrow{0a}$ cuts the hyperplane at only one point. Replacing every normal $a$ by the intersection of ray $\overrightarrow{0a}$ and the hyperplane, and applying Radon's theorem to this set on the affine space, there will be two disjoint subsets $A_1,A_2$ whose convex hulls have a nonempty intersection. Consider a point $p\in\conv A_1\cap \conv A_2$ and let $A'_i\subseteq A_i$ for $i=1,2$ be the set of vertices of a simplex whose relative interior contains $p$. In other words, the points in $A'_i$ are linearly independent. Since $A'_i$ can be seen as a subset of the normals of the polytope and its positive hull is empty of other normals, it follows that $A'_i$ is primitive. However, the positive hulls of $A'_1,A'_2$ both contain $p$, which is another point than $0$. That is we do not have $M3'$, the conclusion follows.
\end{proof}

To show that $D$ implies $M3'$, we need the following lemmas.

\begin{lemma} \label{lem:ray_as_intersection}
	Given two disjoint finite sets of normals $V_1, V_2$.  If $\pos V_1$ and $\pos V_2$ intersect at a point other than $0$, then there are $V'_1\subseteq V_1, V'_2\subseteq V_2$ such that $(\pos V'_1\cap\pos V'_2)\setminus\{0\}$ is precisely a ray in the relative interior of both the positive hulls.  Moreover, the union $V'_1\cup V'_2$ is separated from $0$.
\end{lemma}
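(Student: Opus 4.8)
The plan is to replace $V_1,V_2$ by a pair of subsets that is minimal for the intersection property and to read off every conclusion from that minimality. Concretely, among all pairs $(W_1,W_2)$ with $W_1\subseteq V_1$, $W_2\subseteq V_2$ and $\pos W_1\cap\pos W_2\neq\{0\}$, I would choose one, call it $(V'_1,V'_2)$, minimizing the total size $|W_1|+|W_2|$; such a pair exists because $(V_1,V_2)$ is itself admissible by hypothesis. Fix a point $p\neq 0$ in $\pos V'_1\cap\pos V'_2$. Every reduction below removes one generator from $V'_1$ or $V'_2$ while keeping a common nonzero point, so it would contradict minimality; this is the engine of the whole argument.

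First I would show $V'_1$ and $V'_2$ are linearly independent and that $p$ lies in the relative interior of both positive hulls. If $V'_1$ were dependent, conical Carath\'eodory would express $p$ through a proper linearly independent subset, contradicting minimality; hence $V'_1$ (and likewise $V'_2$) is independent and $\pos V'_1$ is a simplicial cone whose faces are exactly the positive hulls of subsets of $V'_1$. If $p$ were on a proper face of $\pos V'_1$, it would lie in $\pos(W)$ for a proper $W\subsetneq V'_1$, again contradicting minimality; so $p\in\relint\pos V'_1\cap\relint\pos V'_2$, in particular these relative interiors meet.

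The main obstacle is to force the intersection down to a single ray. Set $C=\pos V'_1\cap\pos V'_2$; it is a pointed cone (intersection of pointed cones) and, since the relative interiors meet, $\relint C=\relint\pos V'_1\cap\relint\pos V'_2\ni p$. Suppose $\dim C\ge 2$. Then $C$ has an extreme ray, generated by some $r_0$, and being a $1$-face of a cone of dimension at least $2$ it lies in the relative boundary of $C$, hence outside $\relint C$; therefore $r_0\notin\relint\pos V'_1$ or $r_0\notin\relint\pos V'_2$. Whichever fails puts $r_0$ on a proper face, i.e. $r_0\in\pos(W)$ for a proper subset, contradicting minimality once more. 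Thus $\dim C=1$, so $C\setminus\{0\}$ is exactly a ray; and since $\relint C=\relint\pos V'_1\cap\relint\pos V'_2$, this ray lies in the relative interior of both positive hulls, which is the first assertion.

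For the ``moreover'' I would argue by a rank count. Writing $L_i=\spn V'_i$, the fact that $p\in\relint\pos V'_i$ (a relatively open subset of $L_i$) shows that a full neighborhood of $p$ in $L_1\cap L_2$ sits inside $C$, so $\dim(L_1\cap L_2)=\dim C=1$ and hence $\dim(L_1+L_2)=|V'_1|+|V'_2|-1$. Consequently the $|V'_1|+|V'_2|$ vectors of $V'_1\cup V'_2$ (which are distinct, as $V_1,V_2$ are disjoint) admit only a one-dimensional space of linear relations. One such relation is visible: writing $p=\sum_{a\in V'_1}\lambda_a a=\sum_{b\in V'_2}\mu_b b$ with all $\lambda_a,\mu_b>0$ and subtracting gives $\sum_a\lambda_a a-\sum_b\mu_b b=0$, a relation that is strictly positive on $V'_1$ and strictly negative on $V'_2$. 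Since it spans the whole relation space, every linear relation among $V'_1\cup V'_2$ has this mixed sign pattern, so none has all coefficients nonnegative and not all zero. Equivalently $0\notin\conv(V'_1\cup V'_2)$, and by the separation theorem applied to the compact set $\conv(V'_1\cup V'_2)$ this means $V'_1\cup V'_2$ is separated from $0$, completing the proof.
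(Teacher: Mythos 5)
Your proof is correct, and it rests on the same engine as the paper's: pass to a pair of subsets minimal for the property $\pos W_1\cap\pos W_2\ne\{0\}$ and extract every conclusion from that minimality (cardinality-minimality versus the paper's inclusion-minimality is immaterial here). Where you diverge is in how the two key steps are executed. For the ray step, the paper takes two hypothetical points $p,q$ on distinct rays of the intersection and explicitly forms $p-\alpha q$ with $\alpha=\min\{\lambda_i/\theta_i\}$ to zero out one coefficient, landing in the positive hull of a proper subset; you instead observe that $C=\pos V'_1\cap\pos V'_2$ is a pointed polyhedral cone with $\relint C=\relint\pos V'_1\cap\relint\pos V'_2$, and that if $\dim C\ge 2$ an extreme ray of $C$ would sit on a proper face of one of the simplicial cones --- the same contradiction reached through the facial structure of cones rather than by hand. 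For the separation step the difference is more substantial: the paper runs a third minimality argument, combining a hypothetical convex representation of $0$ with the ray relation via $\alpha=\max\{\lambda_i/\theta_i\}$, whereas you count dimensions to show that $\spn V'_1\cap\spn V'_2$ is a line, hence the space of linear relations on $V'_1\cup V'_2$ is one-dimensional and spanned by the visible mixed-sign relation, so no nontrivial nonnegative relation exists and $0\notin\conv(V'_1\cup V'_2)$. Your route leans on standard convexity facts (relative interiors of intersections, extreme rays, strict separation from a compact convex set) and makes the structural reason for separation transparent, at the cost of being less self-contained; the paper's coefficient manipulations are more elementary and uniform across both steps. Either way the lemma is fully established.
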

\begin{proof}
	Let $U_1\subseteq V_1$ and $U_2\subseteq V_2$ be minimal subsets so that $\pos U_1\cap \pos U_2\ne \{0\}$.

	We show that $(\pos U_1\cap \pos U_2)\setminus\{0\}$ is a ray in the relative interior of both the positive hulls.  Suppose there are two different rays $\overrightarrow{0p},\overrightarrow{0q}$ in the intersection. 
	Let
	\[
		p=\sum_{x_i\in U_1} \lambda_i x_i=\sum_{x_i\in U_2}
		\lambda_i x_i,
	\]
	and
	\[
		q=\sum_{x_i\in U_1} \theta_i x_i=\sum_{x_i\in U_2}
		\theta_i x_i.
	\]
	(Note that there is no confusion of $\lambda_i$ for $x_i\in U_1$ and $x_i\in U_2$ since $U_1$ and $U_2$ are disjoint.  The same is for $\theta_i$.)

	Due to the minimality of $U_1, U_2$, all the coefficients $\lambda_i, \theta_i$ are positive and the points in each of $U_1,U_2$ are linearly independent (by Carath\'eodory's theorem for positive hulls). 
	Consider the point
	\[
		p-\alpha q = \sum_{x_i\in U_1} (\lambda_i-\alpha\theta_i) x_i = \sum_{x_i\in U_2} (\lambda_i-\alpha\theta_i) x_i
	\]
	where $\alpha=\min\{\lambda_i/\theta_i: x_i\in U_1\cup U_2\}$. The choice of $\alpha$ ensures that the coefficients $\lambda_i-\alpha\theta_i$ for $x_i\in U_1\cup U_2$ are all nonnegative with at least one zero, say $\lambda_{i^*}-\alpha\theta_{i^*}=0$ for $x_{i^*}\in U_1$.  Meanwhile, $p-\alpha q$ is not zero as $p,q$ are two different normals.  That $p-\alpha q\in\pos (U_1\setminus\{x_{i^*}\}) \cap \pos U_2$ contradicts the minimality of $U_1, U_2$.

	We now prove that $U_1\cup U_2$ is separated from $0$. Suppose $0\in\conv(U_1\cup U_2)$, that is
	\[
		-\sum_{x_i\in U_1}\lambda_i x_i = \sum_{x_i\in
		U_2}\lambda_i x_i
	\]
	for some nonnegative coefficients $\lambda_i$.

	Since $(\pos U_1\cap\pos U_2)\setminus\{0\}$ is a ray in the relative interior of both positive hulls, we have
	\[
		\sum_{x_i\in U_1} \theta_i x_i = \sum_{x_i\in U_2}
		\theta_i x_i
	\]
	for some positive coefficients $\theta_i$.

	Let $\alpha=\max\{\lambda_i/\theta_i: x_i\in U_1\}$, consider the equation
	\[
		\sum_{x_i\in U_1} (-\lambda_i + \alpha\theta_i) x_i =
		\sum_{x_i\in U_2} (\lambda_i + \alpha\theta_i) x_i.
	\]
	The coefficients on the right hand side are all positive while the coefficients on the left hand side are all nonnegative with at least a zero coefficient.  It means that the positive hull of a proper subset of $U_1$ intersects the positive hull of $U_2$ at a point other than $0$, contradicting the minimality of $U_1, U_2$.

	The sets $U_1,U_2$ satisfy the requirements of $V'_1,V'_2$ in the conclusion.
\end{proof}

\begin{lemma} \label{lem:subspace_causes_contradiction}
	The following claims are equivalent for a finite set $X$ that linearly spans\footnote{Throughout the text, spans are always linear spans, if not explicitly stated.} $\mathbb R^n$:

	(i) There exist $n+1$ points of $X$ in conical position with the positive hull empty of other points of $X$.

	(ii) For some $d\le n$, there exist $d+1$ points of $X$ that span a $d$-dimensional linear subspace and are in conical position with the positive hull empty of other points of $X$.
\end{lemma}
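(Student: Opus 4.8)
The plan is to prove the two implications separately. The direction (i) $\Rightarrow$ (ii) is a short extraction, whereas (ii) $\Rightarrow$ (i) is the substantive part and will be carried out by a dimension-raising induction. A preliminary remark I would make is that distinct points of $X$ may be assumed to lie on distinct rays from the origin; this is automatic in the intended application, where $X=N(P)$ consists of distinct unit normals, and it guarantees that two points of $X$ which are positive multiples of one another must coincide.

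For (i) $\Rightarrow$ (ii), suppose $v_0,\dots,v_n$ are $n+1$ points of $X$ in conical position whose positive hull meets $X$ only in $\{v_0,\dots,v_n\}$. Being $n+1$ vectors in $\mathbb R^n$ they are linearly dependent, so they contain a circuit $W=\{v_{i_0},\dots,v_{i_d}\}$, i.e.\ a minimal linearly dependent subset; a minimal dependent set of $d+1$ vectors spans exactly a $d$-dimensional subspace, and $d\le n$. Conical position is inherited by $W$, since the separating hyperplane for the whole set still works and since any element of $W$ lying in the positive hull of the other elements of $W$ would lie in the positive hull of the remaining $v_j$'s, contradicting conical position of the original set. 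The same observation shows that no $v_j\notin W$ lies in $\pos W$, and since $\pos W\subseteq\pos\{v_0,\dots,v_n\}$ meets $X$ only in the original $n+1$ points, we get $\pos W\cap X=W$. Hence $W$ witnesses (ii).

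For (ii) $\Rightarrow$ (i) I would set up the following inductive step: given points $v_0,\dots,v_d$ of $X$ in conical position with $\pos\{v_0,\dots,v_d\}\cap X=\{v_0,\dots,v_d\}$ and spanning a subspace $L$ of dimension $d<n$, produce one further point $w\in X\setminus L$ so that $v_0,\dots,v_d,w$ retains these properties while spanning a $(d+1)$-dimensional subspace. Iterating this step from the configuration furnished by (ii) until the span is all of $\mathbb R^n$ then yields $n+1$ points satisfying (i). The point $w$ is chosen by a minimality principle: since $X$ spans $\mathbb R^n\supsetneq L$ the set $X\setminus L$ is nonempty, and among the finitely many cones $\pos\{v_0,\dots,v_d,x\}$ with $x\in X\setminus L$ I let $w$ realize one that is minimal under inclusion. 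Conical position of the enlarged set then comes for free from $w\notin L$: if $0\in\conv\{v_0,\dots,v_d,w\}$, or some $v_i\in\pos(\{v_j\}_{j\ne i}\cup\{w\})$, or $w\in\pos\{v_0,\dots,v_d\}$, then solving the relevant relation for $w$ forces $w\in L$, a contradiction.

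The crux is to show that $C'=\pos\{v_0,\dots,v_d,w\}=\pos\{v_0,\dots,v_d\}+\pos\{w\}$ still meets $X$ only in $\{v_0,\dots,v_d,w\}$. Any point of $C'\cap L$ must have vanishing $w$-coefficient, hence lies in $\pos\{v_0,\dots,v_d\}\cap X=\{v_0,\dots,v_d\}$; while for a point $x\in C'\cap X$ with $x\notin L$ one has $\pos\{v_0,\dots,v_d,x\}\subseteq C'$, so inclusion-minimality forces equality and in particular $w\in\pos\{v_0,\dots,v_d,x\}$. Writing $x=c+\mu w$ and $w=c'+\mu'x$ with $c,c'\in\pos\{v_0,\dots,v_d\}$, $\mu,\mu'\ge0$, substituting one into the other, and using that $\pos\{v_0,\dots,v_d\}$ is pointed (it is separated from $0$), I expect to force $\mu\mu'=1$ and $c=c'=0$, so that $x$ and $w$ are positive multiples and hence equal. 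I expect this last verification to be the \emph{main obstacle}: metric choices of $w$, such as the point of $X\setminus L$ closest to $L$, do not work, because an intruding point of $X$ can sit inside the enlarged cone at a higher level; what rescues the argument is precisely the global inclusion-minimality of the cone, which converts such an intruder into a mutual-containment relation and thus into a proportionality. Everything else — the circuit extraction, the dimension bookkeeping, and the automatic conical position — is routine once this step is in place.
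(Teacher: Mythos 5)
Your proof is correct, and its overall architecture --- a short extraction for (i)$\Rightarrow$(ii) and a dimension-raising induction for (ii)$\Rightarrow$(i) that adds one point of $X$ outside the current span at each step, with conical position coming for free from that very fact --- is the same as the paper's. Where you genuinely differ is in how the new point is chosen so that the enlarged positive hull stays empty of other points of $X$: the paper adds an arbitrary $p\notin\spn Y$ and, whenever an intruder $q\in\pos(Y\cup\{p\})\setminus(Y\cup\{p\})$ exists, replaces $p$ by $q$ and argues termination from the strict decrease of the number of points of $X$ in the cone; you instead make a one-shot choice of $w$ with $\pos(\{v_0,\dots,v_d,w\})$ inclusion-minimal over $X\setminus L$ and rule out intruders by the mutual-containment computation that forces an intruder to be a positive multiple of $w$. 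Both mechanisms work, and both rest on the same tacit hypothesis that distinct points of $X$ lie on distinct rays from the origin: you state it explicitly, while the paper's ``fewer points after each step'' claim also needs it, since the only way the count can fail to drop is when the replacement $q$ is a positive multiple of $p$ --- exactly the degenerate case your proportionality argument excludes. As $X$ is a set of unit normals in every application, this is harmless either way. Your route pays for the cleaner selection with the extra mutual-containment verification; the paper's pays with a termination argument of comparable subtlety. Your circuit-based choice in (i)$\Rightarrow$(ii) is likewise fine, though the paper's cruder choice of any $d+1$ points of $Y$ spanning $\spn Y$ already suffices.
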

\begin{proof}
	The direction that (i) implies (ii) is trivial: Let $Y$ denote the set of the $n+1$ points and $d$ be the dimension of the subspace that $Y$ spans, we simply take some $d + 1$ points of $Y$ that span a $d$-dimensional subspace.

	We show the other direction that (ii) implies (i). At first, we start with the set $Y$ of the given $d+1$ points, whose linear span is $d$-dimensional. In each step, we increase the dimension of the span of $Y$ one by one, by adding one point to $Y$ in each step, until we have $n+1$ points. If the current number of points is less than $n+1$, the dimension is therefore less than $n$, which implies the existence of another point $p$ of $X$ not in the span of $Y$. If the positive hull of $Y\cup\{p\}$ is not empty of other points of $X$, we replace $p$ by any point of $X$ in $\pos(Y\cup\{p\})\setminus (Y\cup\{p\})$, and recursively repeat it until the positive hull is empty of other points of $X$. The process will eventually terminate as the positive hull contains fewer points of $X$ after each step and $X$ is finite. Note that the points of $Y\cup\{p\}$ are in conical position, since the points of $Y$ are already in conical position while $p$ is not in the span of $Y$. So, for any $Y$, we can increase the dimension of the linear span by one, by adding one point $p$ but still keeping the positive hull empty of other points in $X$. Therefore, we can come up with $n+1$ points satisfying the hypothesis, thus achieving (i).
\end{proof}
\begin{remark} \label{rem:subspace_causes_contradiction_extended}
	If we have (ii), we also have the same (ii) for every higher $d$, including $n$, which is itself a stronger statement than (i). Also, if we remove the phrase ``with the positive hull empty of other points of $X$'' in both claims, the lemma and the previous statement still hold with an even simpler argument. In particular, the following derivation is used in Section \ref{sec:generating-strmono}: Given a finite set $X$ that spans an $n$-dimensional space, if there are $n+1$ points of $X$ in conical position, there exist some $n+1$ points of $X$ in conical position \emph{and} span an $n$-dimensional space.
\end{remark}

Now comes the verification of $D$ implying $M3'$.
\begin{proposition} \label{prop:D->M3'}
	$D$ implies $M3'$.
\end{proposition}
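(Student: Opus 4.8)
The plan is to establish the contrapositive $\neg M3'\Rightarrow\neg D$. Since $P$ is $n$-dimensional, $N(P)$ spans $\mathbb R^n$, so by the equivalence (ii)$\Rightarrow$(i) in Lemma \ref{lem:subspace_causes_contradiction} it suffices to produce, for some $d\le n$, a set of $d+1$ normals spanning a $d$-dimensional subspace that is in conical position and whose positive hull meets $N(P)$ in exactly those $d+1$ normals; call such a set a \emph{clean circuit}. A clean circuit is precisely condition (ii), and Lemma \ref{lem:subspace_causes_contradiction} then upgrades it to the $n+1$ normals demanded by $\neg D$.

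First I would extract a ray. Failing $M3'$ gives disjoint primitive sets $V_1,V_2\subseteq N(P)$ with $\pos V_1\cap\pos V_2\ne\{0\}$, and Lemma \ref{lem:ray_as_intersection} replaces them by subsets $V_1',V_2'$ whose positive hulls meet in a single ray $r$ lying in the relative interior of each, with $V_1'\cup V_2'$ separated from $0$. Writing $d_i=|V_i'|$, the relative-interior property forces $\spn V_1'\cap\spn V_2'=\spn\{r\}$: any further common direction $u$ would give $r\pm\varepsilon u\in\pos V_1'\cap\pos V_2'$ off the ray for small $\varepsilon$, contradicting that the intersection is a single ray. Hence $\dim\spn(V_1'\cup V_2')=d_1+d_2-1$, so $V_1'\cup V_2'$ is a circuit whose unique linear dependency is $\sum_{x\in V_1'}\lambda_x x-\sum_{z\in V_2'}\mu_z z=0$ with all coefficients positive (this is the relation $r=\sum\lambda_x x=\sum\mu_z z$). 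Primitivity is used twice: it forces $d_1,d_2\ge 2$, for if $V_1'=\{x_0\}$ then $x_0$ lies on $r$, hence in $\pos V_2'\cap N(P)=V_2'$, impossible since $V_1\cap V_2=\emptyset$; and with $d_1,d_2\ge 2$ the sign pattern of the dependency shows no element lies in the positive hull of the others, so $V_1'\cup V_2'$ is in conical position. Thus $\neg M3'$ already yields a conical circuit.

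It remains to make the positive hull clean. The difficulty is that $\pos(V_1'\cup V_2')$ is strictly larger than $\pos V_1'\cup\pos V_2'$ and may contain further normals even though neither $\pos V_i'$ does. I would run a minimal-counterexample argument: among all primitive pairs witnessing $\neg M3'$, choose one for which the resulting circuit $Y=V_1'\cup V_2'$ has the fewest normals of $N(P)$ in $\pos Y$. Suppose some $w\in N(P)\cap\pos Y$ lies outside $Y$. By primitivity $w$ is not on the common ray $r$ (otherwise $w\in\pos V_1'\cap N(P)=V_1'$), and likewise $w\notin\pos V_1'\cup\pos V_2'$. The key computation is that for a circuit $Y$ with the dependency above one has $\bigcap_{v\in Y}\pos(Y\setminus\{v\})=\pos\{r\}$: a point of $\pos Y$ lies in every facet subcone iff all its $V_1'$-coefficients are proportional to the $\lambda_x$ and all its $V_2'$-coefficients to the $\mu_z$, i.e.\ iff it is a nonnegative multiple of $r$. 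Since $w$ is off the ray, there is a generator $v$ with $w\notin\pos(Y\setminus\{v\})$; replacing $v$ by $w$ gives a new circuit $Y'=(Y\setminus\{v\})\cup\{w\}$ in conical position with $\pos Y'\subsetneq\pos Y$, hence with strictly fewer normals in its hull.

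The main obstacle is closing this induction: the swap $Y\mapsto Y'$ must return to the class over which I minimized, namely $Y'$ should again arise from a \emph{primitive} pair, since primitivity is exactly what excludes the degenerate case of a normal sitting on the common ray. I would therefore either re-extract primitive sets from the two Radon halves of $Y'$ via Lemma \ref{lem:ray_as_intersection} and argue that the relevant positive hull can only shrink, so the normal count strictly decreases, or set up the minimality directly on the number of normals in the hull among conical circuits coming from primitive pairs and verify that the swap preserves this structure. Once the minimizer is shown to be clean it is a clean circuit, and Lemma \ref{lem:subspace_causes_contradiction} then produces the $n+1$ normals in conical position with empty positive hull, that is $\neg D$, completing the proof that $D$ implies $M3'$.
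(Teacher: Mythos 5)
Your overall architecture matches the paper's: argue the contrapositive, use Lemma \ref{lem:ray_as_intersection} to reduce to a pair whose positive hulls meet in a single ray, run a minimality-and-descent argument, and finish with Lemma \ref{lem:subspace_causes_contradiction}. The parts you actually carry out are sound: the circuit structure of $V_1'\cup V_2'$, the fact that it is in conical position (using $|V_i'|\ge 2$ from primitivity), and the computation $\bigcap_{v\in Y}\pos(Y\setminus\{v\})=\pos\{r\}$ are all correct, and the swap $Y\mapsto Y'=(Y\setminus\{v\})\cup\{w\}$ does yield $\pos Y'\subsetneq\pos Y$ with $v\notin\pos Y'$.

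However, the step you yourself flag as ``the main obstacle'' is a genuine gap, not a routine verification, and neither of your two proposed fixes closes it as stated. The swap produces a set $Y'$ in conical position with a unique dependency, but its two Radon halves $W_1,W_2$ need not be primitive, and primitivity is exactly what you used to exclude a normal lying on the common ray --- the one configuration for which no generator $v$ with $w\notin\pos(Y\setminus\{v\})$ exists, so the descent can stall there. Your first fix, re-extracting primitive sets from the Radon halves via Lemma \ref{lem:ray_as_intersection}, does not work because that lemma only delivers \emph{minimal} subsets whose positive hulls intersect; primitivity ($\pos V\cap N(P)=V$) is a global condition on $N(P)$ that the lemma never touches. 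Your second fix is precisely the unproven assertion that the swap stays inside the class over which you minimized. The paper sidesteps this by a different descent step: it minimizes $\pos(V_1\cup V_2)$ by inclusion over primitive pairs and, rather than swapping in an arbitrary interior normal, it takes the interior normal $p$ \emph{closest} to $\conv T_1$ in a hyperplane section (which forces $\conv(\{p\}\cup T_1)$ to contain no further normal, i.e.\ restores primitivity for free) and then uses a projection onto $\spn T_1$ to select a proper subset $T_1'$ with $\conv(\{p\}\cup T_1')\cap\conv T_2\ne\emptyset$, producing a strictly smaller primitive pair. You would need an analogous device, or a separate argument disposing of a normal on the ray of a minimizing conical circuit, to complete your proof.
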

\begin{proof}
	Suppose we do not have $M3'$, which means there are two disjoint primitive subsets $V_1, V_2$ of $N(P)$ such that $(\pos V_1 \cap \pos V_2)\setminus\{0\}$ is nonempty. Over all such pairs $V_1,V_2$, we consider a pair so that $\pos(V_1\cup V_2)$ is minimal (with respect to inclusion). 
	By Lemma \ref{lem:ray_as_intersection}, we can assume that $(\pos V_1\cap\pos V_2)\setminus\{0\}$ is a precisely a ray in the relative interior of both $\pos V_1, \pos V_2$, as otherwise we simply take the sets $V'_1,V'_2$ as in Lemma \ref{lem:ray_as_intersection} to replace $V_1,V_2$ (while still preserving the minimality of $\pos(V_1\cup V_2)$). Also by Lemma \ref{lem:ray_as_intersection}, we can assume that $V_1\cup V_2$ is separated from $0$.

	Consider a hyperplane that cuts every ray $\overrightarrow{0p}$ for each $p\in V_1\cup V_2$ at exactly a point $p'$. Let $U_1,U_2$ be the corresponding sets of those points $p'$. Let $U$ be the intersection of all the normals in $\pos (V_1\cup V_2)$ with the hyperplane. Note that $\conv U_1$ and $\conv U_2$ are empty of other points in $U$ (as $V_1,V_2$ are primitive).

	Although $U_1,U_2$ are contained in a hyperplane, it is more convenient to treat this affine space as an $(n-1)$-dimensional linear space by considering the \emph{translation} that takes the single point of $\conv U_1\cap \conv U_2$ to the origin. Let $T,T_1,T_2$ be the images of $U,U_1,U_2$ by the translation, respectively. 
	As $V_1,V_2$ are primitive, each of $T_1,T_2$ is the set of vertices of a simplex whose relative interior contains $0$. Moreover, as $\pos V_1\cap\pos V_2$ is a ray, the linear spaces spanned by $T_1,T_2$ are linearly independent. We assume that the spaces spanned by $T_1$ and $T_2$ are orthogonal (otherwise, we transform the space). 
	We remark that $T_1\cup T_2$ linearly (and affinely) spans a $(|T_1\cup T_2|-2)$-dimensional space.
	Also, the points in $T_1\cup T_2$ are in convex position.

	Suppose we have $D$, we can see that $T\setminus (T_1\cup T_2)$ is nonempty. Indeed, suppose otherwise that $T=T_1\cup T_2$, it follows that the points of $U_1\cup U_2$ are in conical position with the positive hull empty of other points of $U$. Meanwhile, these points span a linear space of dimension $(|T_1\cup T_2|-2)+1=|U_1\cup U_2|-1$. Now an application of Lemma \ref{lem:subspace_causes_contradiction} gives us some $n+1$ normals in conical position with the positive hull empty of other normals, contradiction to Condition $D$.  

	Let $p$ be a point in $T\setminus (T_1\cup T_2)$ that has the smallest distance to $\conv T_1$. It follows that $\conv(\{p\}\cup T_1)$ is empty of other points in $T$ since otherwise another point would be closer to $\conv T_1$ than $p$. Also note that the points of $\{p\}\cup T_1$ are in convex position since $p$ is not in the space spanned by $T_1$.

	Let $p'$ be the projection\footnote{Throughout the text, projections are always orthogonal projections.} of $p$ onto the space of $T_1$. Since $T_1$ is the set of vertices of a simplex containing $0$, there is a proper subset $T'_1\subset T_1$ so that $0\in\conv(\{p'\}\cup T'_1)$. We then have $\conv(\{p\}\cup T'_1)$ and $\conv T_2$ intersecting. Indeed, let
	\[
		\theta p'+\sum_{x_i\in T'_1} \lambda_i x_i = 0,
	\]
	where $\theta$ and $\lambda_i$ for $x_i\in T'_1$ are nonnegative numbers summing to $1$, the point
	\[
		\theta p + \sum_{x_i\in T'_1} \lambda_i x_i = \theta p''
	\]
	is then in the convex hull of $T_2$, where $p''$ is the projection of $p$ onto the space of $T_2$.

	The corresponding normals to $\{p\}\cup T'_1$ and $T_2$ form the two sets satisfying the conditions of $V_1,V_2$ but having a smaller positive hull of the union, contradiction. It follows that we do not have $D$.
\end{proof}

We now have verified Theorem \ref{thm:monotypic-description}.
\begin{proof}[Proof of Theorem \ref{thm:monotypic-description}]
		Theorem \ref{thm:monotypic-description} follows from Propositions \ref{prop:M3'->D} and \ref{prop:D->M3'}.
\end{proof}

We show the equivalence of the conditions $S4'$ and $DD$ in order to prove Theorem \ref{thm:strongly-monotypic-description}. Let us remind Condition $S4'$ for a polytope $P$ to be strongly monotypic: If $Q$ is any polytope with $N(Q)\subseteq N(P)$ then $Q$ is monotypic.

\begin{proof}[Proof of Theorem
	\ref{thm:strongly-monotypic-description}]
	In one direction, if a monotypic polytope $P$ is also strongly monotypic, it should not have $n+1$ normals in conical position (i.e.  Condition $DD$).  Indeed, suppose $V$ is the set of such $n+1$ normals, we consider the subset of $N(P)$ after removing every normal in the positive hull of $V$ except the normals in $V$ themselves, which is $N(P)\setminus((\pos V)\setminus V)$.  Any polytope taking this subset as the set of normals is not monotypic, due to the existence of the $n+1$ normals of $V$ in conical position with the positive hull not containing any other normal.

	In the other direction, if a polytope $P$ satisfies Condition $DD$, then every polytope $Q$ with $N(Q)\subseteq N(P)$ should be monotypic. Indeed, Condition $DD$ means we do not have $n+1$ normals of $N(P)$ in conical position. The same situation also applies for the subset $N(Q)\subseteq N(P)$. We have then Condition $D$ for $Q$, that is $Q$ is monotypic.
\end{proof}

\section{Equivalence of strong monotypy and the generating property}
\label{sec:generating-strmono}
By the equivalence of Theorem \ref{thm:generating-strmono} and Theorem \ref{thm:equivalent-generating-strmono}, to prove either theorem, it remains to prove that every polytope that is monotypic but not strongly monotypic does not have the generating property.

Consider such a polytope $P$, we have some $n+1$ normals of $P$ in conical position for $P$ to be not strongly monotypic (by Condition $DD$ of Theorem \ref{thm:strongly-monotypic-description}). By Remark \ref{rem:subspace_causes_contradiction_extended}, there are some $n+1$ normals of $P$ in conical position that span the whole $\mathbb R^n$. Among such collections of normals, consider a set $X$ of $n+1$ normals so that its positive hull is minimal (with respect to inclusion). Note that the positive hull must contain another normal of $P$ due to the monotypy of $P$ (by Condition $D$ of Theorem \ref{thm:monotypic-description}).

Let $Y$ be the intersection of the rays of the normals in $X$ with a hyperplane not through $0$ so that each ray cuts the hyperplane at only one point. Let $Z$ be the intersection of the rays of the normals in $N(P)$ with the hyperplane. Proposition \ref{prop:positional-normals} below shows that there is only one point $p$ of $(Z\setminus Y) \cap \conv Y$.

\begin{proposition} \label{prop:positional-normals}
	There is only one point $p$ of $(Z\setminus Y) \cap \conv Y$. Furthermore, there is a partition $Y=Y_0\sqcup Y_1\sqcup Y_2$ with a possibly empty $Y_0$ so that: If we treat the affine space of $Y$ as a linear space that takes $p$ as the origin by considering the sets $Y'=Y-p$ and $Y'_i=Y_i-p$ for $i=0,1,2$, then

	(i) The spans of $Y'_0,Y'_1,Y'_2$ are linearly independent.

	(ii) The points in $Y'_0$ are linearly independent.

	(iii) Each of $Y'_1,Y'_2$ is the set of vertices of a simplex whose relative interior contains $0$.
\end{proposition}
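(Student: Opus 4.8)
The plan is to read off the combinatorial structure of $Y$ from three facts: $|Y| = n+1$, the set $Y$ affinely spans the $(n-1)$-dimensional hyperplane (because $X$ spans $\mathbb{R}^n$), and $Y$ is in convex position (because $X$ is in conical position). A family of $n+1$ points affinely spanning an $(n-1)$-dimensional affine space carries a unique affine dependence up to scaling, i.e.\ a unique Radon partition; I would write it as $\sum_{y_i\in Y}\mu_i y_i = 0$ with $\sum_i\mu_i = 0$ and set $Y_1 = \{y_i:\mu_i>0\}$, $Y_2 = \{y_i:\mu_i<0\}$, $Y_0 = \{y_i:\mu_i=0\}$. Convex position forces $|Y_1|,|Y_2|\ge 2$. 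This yields the Radon point $p\in\relint\conv Y_1\cap\relint\conv Y_2$, which by a standard dimension count (the affine hulls of $Y_1$ and $Y_2$ meet in dimension $0$) is the \emph{unique} point of $\conv Y_1\cap\conv Y_2$. The proposition then reduces to showing that every point of $(Z\setminus Y)\cap\conv Y$ equals this $p$, with $Y = Y_0\sqcup Y_1\sqcup Y_2$ the Radon partition.

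Existence of at least one such point is immediate: $X$ consists of $n+1$ normals in conical position, so Condition $D$ (Theorem \ref{thm:monotypic-description}) supplies another normal in $\pos X$, whose trace on the hyperplane lies in $(Z\setminus Y)\cap\conv Y$. The core is uniqueness together with the identification with $p$, and here I would exploit the minimality of $\pos X$ by a swap. Given $q\in(Z\setminus Y)\cap\conv Y$ and a vertex $y_\ell\in Y$ occurring in the support of $q$, form $Y' = (Y\setminus\{y_\ell\})\cup\{q\}$ with corresponding normals $X'$. If $q$ remains a vertex of $\conv Y'$, then $Y'$ is again in convex position, so $X'$ is $n+1$ normals in conical position; since $q\in\conv Y$ and $y_\ell$ lies in $q$'s support the linear span is preserved and $\pos X'\subseteq\pos X$; and convex position of $Y$ forces $y_\ell\notin\conv Y'$ (otherwise substituting a representation of $q$ would place $y_\ell$ inside $\conv(Y\setminus\{y_\ell\})$), so $\pos X'\subsetneq\pos X$, contradicting minimality. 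Hence the swap succeeds exactly when $y_\ell$ occurs with positive coefficient in \emph{every} representation of $q$ as a convex combination of $Y$.

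The crux, and the step I expect to be the main obstacle, is to produce such an ``always positive'' vertex whenever $q\ne p$. Because the affine dependence of $Y$ is unique, the convex representations of a fixed $q$ form a single segment: writing one representation with coefficients $\gamma_i(t) = \gamma_i + t\mu_i$, the admissible parameters fill an interval $[t_-,t_+]$, and each coefficient is an affine function of $t$ that is nonnegative there, so it vanishes in the interval only at an endpoint. A $Y_0$-coefficient is constant, so any $Y_0$ vertex in the support is always positive; thus in the contradiction-free case $q\in\conv(Y_1\cup Y_2)$. For $i\in Y_1$ the coefficient increases in $t$ and can vanish only at $t_-$; if not all $Y_1$-coefficients vanish at $t_-$, a surviving one is always positive and we win. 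So in the remaining case every $Y_1$-coefficient vanishes at $t_-$, whence $q\in\conv Y_2$, and symmetrically every $Y_2$-coefficient vanishes at $t_+$, whence $q\in\conv Y_1$; therefore $q\in\conv Y_1\cap\conv Y_2 = \{p\}$. Combined with existence, this gives $(Z\setminus Y)\cap\conv Y = \{p\}$.

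Finally I would verify (i)--(iii) after translating $p$ to the origin. Since $Y_1$ is affinely independent with $p$ in the relative interior of $\conv Y_1$, the translate $Y_1' = Y_1 - p$ is the vertex set of a simplex with $0$ in its relative interior, and similarly for $Y_2'$, which is (iii). Uniqueness of the affine dependence, whose support is exactly $Y_1\cup Y_2$, means any linear relation distributed across $Y_0',Y_1',Y_2'$ would descend to an affine dependence of $Y$ not supported on $Y_1\cup Y_2$ and hence must be trivial; this gives the linear independence of the three spans in (i) and, as a special case, the linear independence of $Y_0'$ in (ii). These are routine dimension counts of the same flavour as those in the proof of Proposition \ref{prop:D->M3'}, so I anticipate no difficulty there; the genuine work is concentrated in the swap-and-minimality argument above.
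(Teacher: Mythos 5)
Your proof is correct, and its skeleton coincides with the paper's: both extract $Y_0\sqcup Y_1\sqcup Y_2$ from the (essentially unique) Radon partition of the $n+1$ points, both obtain (i)--(iii) from the dimension count $|Y'_0|+(|Y'_1|-1)+(|Y'_2|-1)=n-1$, and both prove that any $q\in(Z\setminus Y)\cap\conv Y$ with $q\ne p$ lets you swap $q$ for a vertex of $Y$ and contradict the minimality of $\pos X$. Where you genuinely diverge is in how the swappable vertex is located. The paper first makes the spans of $Y'_0,Y'_1,Y'_2$ orthogonal and argues geometrically: a positive $Y'_0$-coefficient of $q'$ takes $q'$ out of the span of the rest, and otherwise $q'\notin\conv Y'_1$ (say), so the orthogonal projection of $q'$ onto $\spn Y'_2$ is nonzero and avoids $\conv(\{0\}\cup Y'_2\setminus\{x_i\})$ for some $x_i\in Y'_2$. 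You instead parametrize all convex representations of $q$ by the unique affine dependence $\mu$ of $Y$, getting coefficients $\gamma_i+t\mu_i$ on an interval $[t_-,t_+]$, and observe that a coefficient positive on the whole interval certifies exactly the condition $q\notin\conv(Y\setminus\{y_\ell\})$ that the swap needs, while total failure forces $q\in\conv Y_1\cap\conv Y_2=\{p\}$. Your route is coordinate-free, avoids the orthogonalization, and makes the ``unique affine dependence'' do double duty (it also gives the sign-vector definition of the partition and the independence statements in (i)--(ii)); the paper's route sets up the orthogonal product picture that it reuses anyway in Section~\ref{sec:generating-strmono}. The remaining bookkeeping --- that the swapped family is again $n+1$ normals of $P$ in conical position spanning $\mathbb R^n$ with strictly smaller positive hull --- is the same in both arguments, and you address it (including the slightly delicate case $y_\ell\in Y_0$, where preservation of the affine span follows because a $Y_0$-point in the support of $q$ cannot lie in a nontrivial affine dependence).
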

\begin{proof}
	Since the $n+1$ points in $Y$ affinely span an $(n-1)$-dimensional affine space, we have a partition $Y=A_1\sqcup A_2$ so that $\conv A_1\cap \conv A_2\ne\emptyset$ (by Radon's theorem). Let $p$ be a point in the intersection. We consider the linear space spanned by $Y'=Y-p$, which is $(n-1)$-dimensional since $p\in\conv Y$. Also denote $A'_1=A_1-p$ and $A'_2=A_2-p$. For each $i=1,2$, there exists a subset $Y'_i\subseteq A'_i$ so that $Y'_i$ is the set of vertices of a simplex whose relative interior contains the origin $p'=p-p=0$. Denote $Y'_0=Y'\setminus(Y'_1\cup Y'_2)$. The dimension of the span of $Y'$ is at most
	\[
		|Y'_0|+(|Y'_1|-1)+(|Y'_2|-1)=n-1.
	\]
	To make the dimension precisely $n-1$, the spans of $Y'_0,Y'_1,Y'_2$ must be linearly independent and the points in $Y'_0$ are also linearly independent.

	It remains to prove that the intersection $(Z\setminus Y)\cap\conv Y$ is precisely $p$. Suppose there are two points of $Z\setminus Y$ in the convex hull of $Y$, one of the two points, say $q$, is not $p$.
	Suppose that the spans of $Y'_0,Y'_1,Y'_2$ are orthogonal, otherwise we apply an affine transformation. Consider a representation of $q'=q-p$ in the span of $Y'$:
	\[
		q'=\sum_{x_i\in Y'} \lambda_i x_i,
	\]
	where $\lambda_i$ are all nonnegative and their sum is $1$.
	We show that the $n+1$ points of $\{q'\}\cup Y'\setminus\{x_i\}$ are in convex position for some $x_i$ and their convex hull is $(n-1)$-dimensional. Indeed, if $\lambda_i>0$ for some $x_i\in Y'_0$, then $q'$ is not in the span of the rest $Y'\setminus\{x_i\}$. In the other case that $q'\in\conv (Y'_1\cup Y'_2)$, we have either $q'\notin\conv Y'_1$ or $q'\notin\conv Y'_2$ since otherwise $q'\in\conv Y'_1\cap \conv Y'_2$ would mean $q'=0$, contradicting $q\ne p$.
	Let us say $q'\notin\conv Y'_1$, it follows that the projection $q'_2$ of $q'$ to $\conv Y'_2$ is nonzero. It means that $q'_2$ is not in the convex hull of $\{0\}\cup Y'_2\setminus \{x_i\}$ for some $x_i\in Y'_2$. Since the projection of $Y'_1$ to $\conv Y'_2$ is $0$, it follows that $q'$ is not in the convex hull of $Y'\setminus \{x_i\}$. In either case, the points $\{q'\}\cup Y'\setminus \{x_i\}$ are in convex position.
	It remains to prove that the dimension is also preserved: The dimensions of $\conv Y'$ and $\conv (Y'\setminus\{x_i\})$ for every $x_i\in Y'_1\cup Y'_2$ are the same (which is also the same as the dimension of $\conv (\{q'\}\cup Y'\setminus \{x_i\}$). Discarding any $x_i\in Y'_0$ with $\lambda_i>0$ still gives $\dim\conv Y' = \dim\conv (\{q'\}\cup Y'\setminus \{x_i\})$.
	In total, the corresponding $n+1$ points in the affine hull of $Y$ contradict the minimality of $X$.
\end{proof}

By the nature of the normals of $Y$ in Proposition \ref{prop:positional-normals}, we can assume that every normal in $Y$ forms an acute angle with $p$, otherwise we can apply an affine transformation to obtain it.
Denote by $\pi(y)$ the projection of a point $y$ to the hyperplane $\{x: \langle p,x\rangle = 0\}$. 
We show that Proposition \ref{prop:positional-normals} still holds if we replace $Y'$ and $Y'_i$ for $i=0,1,2$ by the projections $\pi(Y)$ and $\pi(Y_i)$ for $i=0,1,2$. Indeed, in the beginning of this section we consider a hyperplane so that the ray of each normal in $X$ cuts the hyperplane at precisely one point. Now instead of this hyperplane, we consider the hyperplane through $p$ and perpendicular to $p$. Since the intersection of the ray of each normal $x\in X$ to the new hyperplane is still one point due to the acuteness of the angle that each $y\in Y$ forms with $p$, the conclusion of Proposition \ref{prop:positional-normals} still holds. However, this time each point $y'=y-p$ turns out to coincide with $\pi(y)$.

Let us treat the facet $F$ with normal $p$ of $P$ as a polytope in one lower dimension by \emph{translating} the polytope $P$ so that $F$ lies in the hyperplane 
\[
	p^\perp=\{x: \langle p,x\rangle =0\}
\]
through the origin. The facets of $F$ are actually $(n-2)$-faces of $P$. In this perspective, we mention the following fact about monotypic polytopes from \cite{mcmullen1974monotypic}.
\begin{theorem} [Property $M4$ of {\cite[Theorem $1$]{mcmullen1974monotypic}}]
	\label{thm:facets-intersecting}
	For every primitive subset of $k$ normals in $N(P)$ for a monotypic polytope $P\subset\mathbb R^n$, the corresponding facets intersect at an $(n-k)$-face of $P$.
\end{theorem}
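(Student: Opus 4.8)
The plan is to pass to the normal fan and reduce the statement to a single claim about primitive sets. Since a monotypic polytope is simple, its normal fan $\mathcal N(P)$ is a complete simplicial fan whose rays are exactly the unit normals in $N(P)$, and each $k$-dimensional cone of $\mathcal N(P)$ is the normal cone of an $(n-k)$-face of $P$, spanned by the (linearly independent) normals of the facets through that face. Under this dictionary a set $V\subseteq N(P)$ of $k$ normals has its facets meeting in an $(n-k)$-face precisely when $\pos V$ is itself a cone of $\mathcal N(P)$: if $\pos V\in\mathcal N(P)$ then it is the normal cone of a face $G$ with $\dim G=n-\dim\pos V=n-k$ and $G=\bigcap_{u\in V}F_u$; conversely, if the facets meet in an $(n-k)$-face $G$, then $N(G,P)$ is a $k$-dimensional simplicial cone containing the $k$ independent rays of $V$, hence equals $\pos V$. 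As $V$ is linearly independent we have $\dim\pos V=k$, so it suffices to prove: \emph{if $V$ is primitive, then $\pos V$ is a cone of $\mathcal N(P)$.}

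To prove this claim, first pick a vector $w$ in the relative interior of $\pos V$ and let $C$ be the unique cone of $\mathcal N(P)$ whose relative interior contains $w$; write $C=\pos S$, where $S\subseteq N(P)$ is its set of rays. Since a ray of a simplicial fan that lies in one of its cones must be an extreme ray of that cone, no normal of $P$ lies in $\pos S$ other than those in $S$, so $S$ is itself primitive. The goal becomes $V=S$, which gives $\pos V=C\in\mathcal N(P)$. Writing $w=\sum_{u\in V}\lambda_u u=\sum_{s\in S}\mu_s s$ with all $\lambda_u,\mu_s>0$ exhibits an overlap of the two simplicial cones in their relative interiors. A proper containment $V\subsetneq S$ (or $S\subsetneq V$) is impossible, since then $\pos V$ would be a proper face of the simplicial cone $\pos S$ (or vice versa) and the relative interiors of distinct faces of a cone are disjoint, contradicting $w\in\relint\pos V\cap\relint\pos S$. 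Hence if $V\neq S$ there are $u^*\in V\setminus S$ and $s^*\in S\setminus V$.

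Monotypy enters exactly here, and this is the step I expect to be the main obstacle. Primitivity alone does \emph{not} force $\pos V\in\mathcal N(P)$: in a complete simplicial fan a low-dimensional cone $\pos V$ can be pierced transversally by another cone of the fan whose rays avoid $\pos V$, so one must use monotypy to rule out precisely this crossing. I would use condition $M3'$ (Theorem \ref{thm:empty-intersection}). In the clean case $V\cap S=\emptyset$, the sets $V$ and $S$ are disjoint primitive subsets of $N(P)$ with $w\in(\pos V\cap\pos S)\setminus\{0\}$, directly contradicting $M3'$; indeed Lemma \ref{lem:ray_as_intersection} even produces a witnessing ray. The remaining work is to reduce the general case to this disjoint one by eliminating the shared rays $W=V\cap S$. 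I would pass to the face $G_W=\bigcap_{u\in W}F_u$, viewed as a polytope of dimension $n-|W|$ whose normals are the orthogonal projections of the normals of $P$ onto $(\spn W)^\perp$. The images of $V\setminus W$ and $S\setminus W$ are then (disjoint) primitive subsets of $N(G_W)$ whose positive hulls still meet at the image $\pi(w)$ of $w$; this image is nonzero because $w\notin\spn W$ (as $V\setminus W\neq\emptyset$ and $V$ is linearly independent), so applying $M3'$ to $G_W$ yields the contradiction and forces $V=S$.

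The crux of the whole argument is thus the reduction to the disjoint case: one must verify that $G_W$ is again monotypic and that primitivity together with the nonzero overlap survives the projection onto $(\spn W)^\perp$. The monotypy of $G_W$ should follow by showing that any configuration in $G_W$ violating condition $D$ (Theorem \ref{thm:monotypic-description}) lifts to normals of $P$ in conical position with empty positive hull, so that $D$ is inherited by the face — a statement in the same spirit as condition $S4'$ (Theorem \ref{thm:subset-monotypic}). Once this inheritance is established, the disjoint-case application of $M3'$ closes the argument and shows that $\pos V$ is a cone of $\mathcal N(P)$, which is exactly Theorem \ref{thm:facets-intersecting}.
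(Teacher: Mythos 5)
The paper offers no proof of this statement at all: it is imported verbatim as Property $M4$ of \cite[Theorem~1]{mcmullen1974monotypic} and used as a black box, so there is no internal proof to compare yours against. What you are actually attempting is to re-derive the implication $M3'\Rightarrow M4$ inside the cited equivalence. Your normal-fan dictionary is sound: monotypic polytopes are simple, so the fan is simplicial with ray set $N(P)$; the facets of $V$ meet in an $(n-k)$-face iff $\pos V$ is a cone of the fan; the set $S$ of rays of the cone whose relative interior contains $w$ is primitive (a ray of the fan lying in a cone is a face of it); and the cases $V\subsetneq S$, $S\subsetneq V$ (disjoint relative interiors) and $V\cap S=\emptyset$ (direct appeal to $M3'$) are all handled correctly.

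The gap is exactly where you flag it, and it is not closed. In the overlapping case $W=V\cap S\neq\emptyset$ you need $\pi(V\setminus W)$ to be a primitive subset of $N(G_W)$. But the statement $\pi(u)\in N(G_W)$ for $u\in V\setminus W$ says precisely that the facets indexed by the primitive set $W\cup\{u\}$ meet in an $(n-|W|-1)$-face of $P$ --- an instance of the very theorem being proved. An induction on $k$ does not rescue this: when $|V\setminus W|=1$ one has $W\cup\{u\}=V$, so the required instance has the same size $k$ and the argument is circular. Two further assertions are left unproved: that the face $G_W$ is itself monotypic, and that primitivity survives the projection onto $(\spn W)^{\perp}$ (from $\pi(u')\in\pos\pi(S\setminus W)$ one only gets $u'\in\pos(S\setminus W)+\spn W$, with possibly negative coefficients on $W$, so $u'$ need not lie in $\pos S$ and the "empty positive hull" condition does not obviously descend). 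Until the overlapping case is handled by some mechanism that does not presuppose the theorem for sets of the same size, the proof is incomplete; since the result is available by citation, as the paper treats it, that remains the safe course.
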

It follows from Theorem \ref{thm:facets-intersecting} that the facet of $P$ with each normal $y\in Y$ cuts $F$ at an $(n-2)$-face of $P$, since $\{y,p\}$ for every $y\in Y$ is primitive (by Proposition \ref{prop:positional-normals}). In other words, 
\[
	\pi(Y)\subseteq N(F).
\]

For convenience, we assume that the spans of $\pi(Y_0), \pi(Y_1), \pi(Y_2)$ are orthogonal (by an affine transformation).
In this way, we can represent the $(n-1)$-dimensional subspace $p^\perp$ as $p^\perp= \mathcal R_0\times \mathcal R_1\times \mathcal R_2$ where
\[
	\mathcal R_0=\spn \pi(Y_0),\qquad \mathcal R_1=\spn\pi(Y_1),\qquad \mathcal R_2=\spn\pi(Y_2).
\]
The dimensions of these subspaces $\mathcal R_0,\mathcal R_1,\mathcal R_2$ are $|Y_0|,|Y_1|-1,|Y_2|-1$, respectively.
When seeing $F$ as an $(n-1)$-polytope in $p^\perp$, the supporting halfspaces of $F$ with normals in $\pi(Y_1)$ intersect at the following set in $p^\perp$:
\[
	\mathcal R_0\times S_1\times \mathcal R_2
\]
where $S_1$ is a $(|Y_1|-1)$-simplex in $\mathcal R_1$, since $\pi(Y_1)$ is the set of vertices of a simplex whose relative interior contains $0$.
Likewise, the supporting halfspaces of $F$ with normals in $\pi(Y_2)$ intersect at 
\[
	\mathcal R_0\times \mathcal R_1 \times S_2
\]
where $S_2$ is a simplex in $\mathcal R_2$. On the other hand, since the points of $\pi(Y_0)\subset \mathcal R_0$ are linearly independent, the intersection of the supporting \emph{hyperplanes} of $F$ with normals in $\pi(Y_0)$ is 
\[
	\{x^*\}\times \mathcal R_1\times \mathcal R_2
\]
for some point $x^*\in \mathcal R_0$. It means that the supporting halfspaces of $F$ with normals in $\pi(Y_1)\cup\pi(Y_2)$ and the supporting hyperplanes of $F$ with normals in $\pi(Y_0)$ all together intersect at
\[
	G=\{x^*\}\times S_1\times S_2,
\]
which is a translate of the Cartesian product of two simplices.

Note that $S_1$ is determined completely by the supporting halfspaces of $F$ in $p^\perp$:
\begin{equation} \label{eq:horizontal-representation}
	\{x:\langle \pi(y), x\rangle \le h_F(\pi(y))\}
\end{equation}
for $y\in Y_1$. 
(The intersection of the halfspaces and $\mathcal R_1$ is $S_1$.)
The same situation also applies to $S_2$. (Here we denote $h_K(\vec{n})=\sup_{x\in K} \langle \vec{n}, x\rangle$ for a set $K$ and a vector $\vec{n}$.)

As $G$ is the intersection of supporting halfspaces and supporting hyperplanes of $F$, one may naturally ask if $G$ is a face, which is confirmed in the following proposition.
\begin{proposition} \label{prop:G-face}
	$G$ is a face of $F$.
\end{proposition}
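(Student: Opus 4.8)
The plan is to exhibit $G$ as the intersection of $F$ with a supporting hyperplane. First I would produce that hyperplane from the block $\pi(Y_0)$. Put $v=\sum_{y\in Y_0}\pi(y)\in\mathcal R_0$ and $c=\sum_{y\in Y_0}h_F(\pi(y))$. For every $x=(x_0,x_1,x_2)\in F$ written in the coordinates of $\mathcal R_0\times\mathcal R_1\times\mathcal R_2$ we have $\langle v,x\rangle=\sum_{y\in Y_0}\langle\pi(y),x_0\rangle\le c$, and since $\pi(Y_0)$ is a linearly independent spanning set of $\mathcal R_0$, equality holds exactly when every supporting hyperplane with normal $\pi(y)$, $y\in Y_0$, is tight, i.e. exactly when $x_0=x^*$. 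Hence the face of $F$ in direction $v$ is $F\cap(\{x^*\}\times\mathcal R_1\times\mathcal R_2)$. (When $Y_0=\emptyset$ this step is vacuous and one argues directly that $G=F$.)

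Because $F$ lies in the halfspaces \eqref{eq:horizontal-representation} for all $y\in Y_1\cup Y_2$, we already know $F\subseteq\mathcal R_0\times S_1\times S_2$; intersecting with $\{x^*\}\times\mathcal R_1\times\mathcal R_2$ gives $F\cap(\{x^*\}\times\mathcal R_1\times\mathcal R_2)\subseteq\{x^*\}\times S_1\times S_2=G$. So one inclusion of the desired identity is immediate, and it remains to prove $G\subseteq F$; together these identify the face above with $G$.

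For the reverse inclusion I would show that every vertex of $G$ is a vertex of $F$, whence $G=\conv(\text{its vertices})\subseteq F$ by convexity. A vertex of $G$ is obtained by choosing a vertex of $S_1$ and a vertex of $S_2$, that is, by dropping one normal $y_1\in Y_1$ and one $y_2\in Y_2$ while keeping $\pi(Y_0)$; it is the unique point at which the $n-1$ supporting hyperplanes with normals $W:=\pi(Y_0)\cup(\pi(Y_1)\setminus\{\pi(y_1)\})\cup(\pi(Y_2)\setminus\{\pi(y_2)\})$ are all tight. Lifting back, let $\hat W\subseteq X$ be the $n-1$ normals of $P$ projecting to $W$ and consider the $n$ normals $\{p\}\cup\hat W$. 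One checks these are linearly independent (apply $\pi$ to any dependence to kill $p$ and use that $W$ is an independent spanning set of $p^\perp$, forcing the remaining coefficients, and then the $p$-coefficient, to vanish). If $\{p\}\cup\hat W$ is moreover primitive, then Theorem \ref{thm:facets-intersecting} applied to the monotypic polytope $P$ with $k=n$ yields a common vertex $w$ of the corresponding facets; as $p$ is among them, $w\in F$, and the tightness of the $F$-facets with normals $W$ at $\pi(w)$ forces $\pi(w)$ to be exactly the chosen vertex of $G$. Ranging over all choices of $y_1,y_2$ exhausts the vertices of $G$.

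The main obstacle is the primitivity of $\{p\}\cup\hat W$, namely that its positive hull contains no further normal of $P$. Suppose $m\in N(P)$ lies in $\pos(\{p\}\cup\hat W)\setminus(\{p\}\cup\hat W)$. Since the normals in $X$ (hence in $\hat W$) make an acute angle with $p$, the ray of $m$ meets the hyperplane through $p$ perpendicular to $p$ at a point $\mu$, and $\mu\in\conv(\{p\}\cup(Y\setminus\{y_1,y_2\}))\subseteq\conv Y$. Using that $\pi(Y_1),\pi(Y_2)$ are the vertex sets of simplices with $0$ in their relative interiors lying in the orthogonal subspaces $\mathcal R_1,\mathcal R_2$, a short computation (a vertex of such a simplex is never in the convex hull of $0$ together with the opposite facet) shows $y_1,y_2\notin\conv(\{p\}\cup(Y\setminus\{y_1,y_2\}))$, so $\mu\notin Y$ and thus $\mu\in(Z\setminus Y)\cap\conv Y$. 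By Proposition \ref{prop:positional-normals} this set is the single point $p$, so $\mu=p$ and $m$ must be the normal $p$ itself, contradicting $m\notin\{p\}\cup\hat W$. I expect this to be the delicate step, and the cleanest route is to reduce it to Proposition \ref{prop:positional-normals} as above rather than re-running the minimality of $\pos X$.
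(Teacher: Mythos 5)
Your proof is correct and takes essentially the same route as the paper's: identify each vertex of $G$ with a vertex of $P$ by showing $\{p\}\cup\hat W$ is primitive and applying Theorem \ref{thm:facets-intersecting} with $k=n$. You additionally spell out two steps the paper leaves implicit --- the reduction of the primitivity of $U$ to Proposition \ref{prop:positional-normals} (including the check that $y_1,y_2$ do not lie in the relevant convex hull) and the exhibition of $G$ as the intersection of $F$ with a single supporting hyperplane in direction $\sum_{y\in Y_0}\pi(y)$ --- but these are elaborations of the paper's argument rather than a different one.
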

\begin{proof}[Proof of Proposition \ref{prop:G-face}]
	In $p^\perp$ each vertex of $G$ is the intersection of the supporting hyperplanes of $F$ with normals in $\pi(Y_0)\cup\pi(Y_1\setminus\{y_1\})\cup\pi(Y_2\setminus\{y_2\})$ for some $y_1\in Y_1,y_2\in Y_2$. In the original space $\mathbb R^n$, it is the intersection of the supporting hyperplanes of $P$ with normals in $U=\{p\}\cup Y_0\cup (Y_1\setminus \{y_1\})\cup (Y_2\setminus \{y_2\})$. The set $U$ is actually primitive by Proposition \ref{prop:positional-normals}. By Theorem \ref{thm:facets-intersecting}, the facets of the normals in $U$ intersect at a face of dimension $n-|U|=0$, i.e. a vertex of $P$. 
	It follows that each vertex of $G$ is also a vertex of $P$, which is in turn a vertex of $F$, since $G\subseteq p^\perp$. In other words, $G$ is a face of $F$.
\end{proof}

Since $F$ is bounded by the hyperplanes in $p^\perp$ that are defined in \eqref{eq:horizontal-representation}, the projections $F$ onto $\mathcal R_1,\mathcal R_2$ are subsets of $S_1,S_2$. That $G$ is a face confirms that they are actually identical.
\begin{corollary} \label{cor:simplices-projections}
	$S_1,S_2$ are the projections of $F$ onto $\mathcal R_1,\mathcal R_2$, respectively.
\end{corollary}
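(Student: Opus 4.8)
The plan is to establish the two inclusions separately for each factor; I will spell out the argument for $S_1$, since the case of $S_2$ is symmetric (exchange the roles of $\mathcal R_1$ and $\mathcal R_2$, and of $Y_1$ and $Y_2$). One of the inclusions is already recorded in the paragraph preceding the corollary: because $F$ lies inside each halfspace \eqref{eq:horizontal-representation} for $y\in Y_1$, and each such halfspace has its normal $\pi(y)$ lying in $\mathcal R_1$, the defining inequality constrains only the $\mathcal R_1$-coordinate of a point (the decomposition $p^\perp=\mathcal R_0\times\mathcal R_1\times\mathcal R_2$ being orthogonal). Hence every point of $F$ has its $\mathcal R_1$-coordinate satisfying all the inequalities that cut out $S_1$, so the projection of $F$ onto $\mathcal R_1$ is contained in $S_1$.

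For the reverse inclusion I would invoke Proposition \ref{prop:G-face}, which asserts that $G=\{x^*\}\times S_1\times S_2$ is a face of $F$, and in particular $G\subseteq F$. By the Cartesian-product description of $G$, projecting $G$ onto the factor $\mathcal R_1$ recovers exactly $S_1$: a point of $G$ has the form $(x^*,s_1,s_2)$ with $s_1$ ranging over all of $S_1$, so its $\mathcal R_1$-shadow is all of $S_1$. Since $G\subseteq F$, taking projections onto $\mathcal R_1$ preserves this inclusion, giving $S_1\subseteq$ projection of $F$ onto $\mathcal R_1$. Combining the two inclusions yields that the projection of $F$ onto $\mathcal R_1$ equals $S_1$, and the identical argument applied to the other factor gives that the projection of $F$ onto $\mathcal R_2$ equals $S_2$.

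I do not anticipate a genuine obstacle, since the substantive work — verifying that $G$ is truly a face of $F$, rather than merely an intersection of supporting hyperplanes and halfspaces — has already been carried out in Proposition \ref{prop:G-face}, resting in turn on Theorem \ref{thm:facets-intersecting} and Proposition \ref{prop:positional-normals}. The only point demanding a moment's attention is that projecting the face $G$ returns the \emph{entire} simplex $S_1$ and not a proper subset; this is immediate from the product form $G=\{x^*\}\times S_1\times S_2$, whose second factor is precisely $S_1$. In short, the corollary is the routine conjunction of the containment already noted in the text with the surjectivity of the projection of the product face $G$ onto each simplicial factor.
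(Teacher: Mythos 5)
Your proposal is correct and follows exactly the paper's own (very brief) argument: the containment of the projection in $S_1$ comes from the halfspace description \eqref{eq:horizontal-representation}, and the reverse containment comes from $G\subseteq F$ together with the product form $G=\{x^*\}\times S_1\times S_2$ established in Proposition \ref{prop:G-face}. Nothing further is needed.
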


Let $F_\epsilon$ be the intersection of $P$ and the hyperplane $\langle p,x\rangle = -\epsilon$ for some small $\epsilon\ge 0$. (We allow $\epsilon=0$, which may not sound very conventional, to let $F_0=F$, for the sake of convenience.) By the definition of monotypic polytope, $F_\epsilon$ for a small enough $\epsilon$ is combinatorially equivalent to $F$ with the same set of normals. That is $F_\epsilon$ also has a face $G_\epsilon$ corresponding to the face $G$ of $F$. 
Its projection onto $p^\perp$ has a similar representation 
\[
	\pi(G_\epsilon)=\{x^*_\epsilon\}\times S_1(\epsilon)\times S_2(\epsilon)
\]
to the representation $G=\{x^*\}\times S_1\times S_2$.
The two simplices $S_1(\epsilon), S_2(\epsilon)$ share many properties with $S_1,S_2$ (which are actually $S_1(0),S_2(0)$), e.g. the property of Corollary \ref{cor:simplices-projections} that $S_1(\epsilon),S_2(\epsilon)$ are the projections of $F_\epsilon$ onto $\mathcal R_1,\mathcal R_2$, respectively. However, the ``sizes'' of $S_i(\epsilon)$ ($i=1,2$) for different values of $\epsilon$ are different and comparable due to the acuteness of the angles, as in the following proposition.

\begin{proposition} \label{prop:compare-simplices}
	For any two small enough $\epsilon,\epsilon'$ with $\epsilon>\epsilon'$, the simplex $S_1(\epsilon')$ lies in the relative interior of the simplex $S_1(\epsilon)$. The same applies for $S_2(\epsilon')$ and $S_2(\epsilon)$.
\end{proposition}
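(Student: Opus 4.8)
The plan is to pin down $S_1(\epsilon)$ as an explicit intersection of halfspaces inside $\mathcal{R}_1$ and then read off how these halfspaces move as $\epsilon$ varies. First I would record the normal directions attached to $Y_1$: for each $y\in Y_1$ the corresponding outward normal $\nu_y$ of $P$ points along the ray $\overrightarrow{0y}$, and since $y$ lies in the hyperplane through $p$ perpendicular to $p$ we have $y=\pi(y)+p$. Hence $\nu_y$ is a \emph{positive} multiple of $\pi(y)+p$, the sign being guaranteed by the acuteness of the angle between $y$ and $p$. After normalizing, the supporting halfspace of $P$ with this normal can be written as $\langle \pi(y)+p,x\rangle\le b_y$ for a constant $b_y$ that does not depend on $\epsilon$.

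Next I would restrict to the slice and project. On the hyperplane $\langle p,x\rangle=-\epsilon$ the constraint above becomes $\langle \pi(y),x\rangle\le b_y+\epsilon$, so the additive constant increases linearly in $\epsilon$ with slope $1$. Because $\pi(y)\in\mathcal{R}_1$ and the subspaces $\mathcal{R}_0,\mathcal{R}_1,\mathcal{R}_2$ are orthogonal, this constraint depends only on the $\mathcal{R}_1$-component of the point. Invoking Corollary \ref{cor:simplices-projections} applied to $F_\epsilon$ (which carries the same normals as $F$ for small $\epsilon$ by monotypy), $S_1(\epsilon)$ is exactly the projection of $F_\epsilon$ onto $\mathcal{R}_1$, and therefore
\[
	S_1(\epsilon)=\{x_1\in\mathcal{R}_1:\langle \pi(y),x_1\rangle\le b_y+\epsilon\ \text{for all}\ y\in Y_1\}.
\]
This set is bounded — indeed a $(|Y_1|-1)$-simplex — precisely because $\pi(Y_1)$, being the vertex set of a simplex with $0$ in its relative interior, positively spans $\mathcal{R}_1$; in particular each of the $|Y_1|$ inequalities is facet-defining.

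With this representation the conclusion is immediate. Since every defining inequality is facet-defining, the relative interior of $S_1(\epsilon)$ is the locus where all of them are strict, namely $\{x_1:\langle \pi(y),x_1\rangle< b_y+\epsilon\ \text{for all}\ y\in Y_1\}$. If $x_1\in S_1(\epsilon')$ with $\epsilon'<\epsilon$, then $\langle \pi(y),x_1\rangle\le b_y+\epsilon'<b_y+\epsilon$ for every $y\in Y_1$, so $x_1\in\relint S_1(\epsilon)$. Hence $S_1(\epsilon')\subset\relint S_1(\epsilon)$, and the identical argument with $Y_2$ and $\mathcal{R}_2$ handles $S_2$.

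The one step that needs care — and which I regard as the main obstacle — is the exact, uniform shift $h_{F_\epsilon}(\pi(y))=b_y+\epsilon$ asserted above: that the defining constant grows linearly with slope exactly $1$ and that the simplex keeps all of its facets as $\epsilon$ moves. The linear shift with the correct sign is exactly where acuteness of the angles enters (an obtuse angle would shrink the slice as $\epsilon$ grows and would reverse the inclusion), while the persistence of the full facet structure is where monotypy of $P$ is used, guaranteeing that $F_\epsilon$ retains the normals $\pi(Y_1)$ for all sufficiently small $\epsilon$. Once these two points are secured, strict containment follows at once from the strict inequality $\epsilon'<\epsilon$.
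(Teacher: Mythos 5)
Your proposal is correct and follows essentially the same route as the paper: both identify $S_1(\epsilon)$ with the intersection in $\mathcal R_1$ of the halfspaces $\{x:\langle\pi(y),x\rangle\le h_{\pi(F_\epsilon)}(\pi(y))\}$, use the primitivity of $\{y,p\}$ (via Property $M4$ and the combinatorial stability of the slices) to see that these support values are realized on the facet hyperplanes $A_y$, and use acuteness to see that they strictly increase with $\epsilon$. Your explicit computation $h_{\pi(F_\epsilon)}(\pi(y))=b_y+\epsilon$ via the normalization $y=\pi(y)+p$ is just a concrete instantiation of the paper's observation \eqref{eq:acuteness-corollary}, and your closing step making the relative-interior containment explicit is a welcome addition.
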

Before presenting the proof, we observe that: Consider any hyperplane $A_q=\{x:\langle q,x\rangle=c\}$ for any normal $q$ that forms an acute angle with $p$ and some real $c$. For any two reals $d,d'$ with $d>d'$, denote $A_p=\{x:\langle p,x\rangle=d\}$ and $A'_p=\{x:\langle p,x\rangle=d'\}$, we have
\begin{equation} \label{eq:acuteness-corollary}
	h_{\pi(A_p\cap A_q)} (\pi(q)) < h_{\pi(A'_p\cap A_q)} (\pi(q)).
\end{equation}
This can be observed in the plane picture of the $2$-dimensional span of $\{p,q,\pi(q)\}$. (Note that $\pi(q)$ lies in the span of $\{p,q\}$.)

\begin{proof}[Proof of Proposition \ref{prop:compare-simplices}]
	We prove for $S_1(\epsilon')$ and $S_1(\epsilon)$ only. The claim for $S_2(\epsilon')$ and $S_2(\epsilon)$ is treated similarly.

	It follows from the representation of $S_1$ in \eqref{eq:horizontal-representation} that $S_1(\epsilon)$ is the simplex that the halfspaces 
	\[
		\{x:\langle \pi(y),x\rangle \le h_{\pi(F_\epsilon)} (\pi(y))\}
	\]
	for $y\in Y_1$ intersect in $\mathcal R_1$. Likewise, the corresponding halfspaces for $S_1(\epsilon')$ are 
	\[
		\{x:\langle \pi(y),x\rangle \le h_{\pi(F_{\epsilon'})} (\pi(y))\}
	\]
	for $y\in Y_1$.
	For each $y\in Y_1$, let $A_y$ denote the hyperplane of the facet of $P$ with normal $y$, and let $A_p, A'_p$ denote the hyperplanes of $F_\epsilon,F_{\epsilon'}$, respectively.
	Since the facet of $P$ with normal $y$ cuts $F$ at an $(n-2)$-face of $P$ (by the primitivity of $\{y,p\}$ and Theorem \ref{thm:facets-intersecting}), we have
	\[
		h_{\pi(F_\epsilon)}(\pi(y)) = h_{\pi(A_p\cap A_y)}(\pi(y)), \qquad
		h_{\pi(F_{\epsilon'})}(\pi(y)) = h_{\pi(A'_p\cap A_y)}(\pi(y)).
	\]
	The conclusion follows, as by \eqref{eq:acuteness-corollary} the acuteness of the angle that every $y\in Y_1$ forms with $p$ gives
	\[
		h_{\pi(A_p\cap A_y)}(\pi(y)) > h_{\pi(A'_p\cap A_y)}(\pi(y)).\qedhere
	\]
\end{proof}

Pick a small enough $\epsilon_0$. Let $t$ be a translation vector in $\mathcal R_2$ so that $S_2(\epsilon_0)$ and $S_2(\epsilon_0)+t$ intersect at precisely a vertex $v^*_{\epsilon_0}$ of both simplices. It follows from the representation $\pi(G_{\epsilon_0})=\{x_{\epsilon_0}^*\}\times S_1(\epsilon_0)\times S_2(\epsilon_0)$ that $G_{\epsilon_0}$ and $G_{\epsilon_0}+t$ intersect precisely at a face $H_{\epsilon_0}$ of both $G_{\epsilon_0}$ and $G_{\epsilon_0}+t$ with $\pi(H_{\epsilon_0})=\{x_{\epsilon_0}^*\}\times S_1(\epsilon_0)\times \{v_{\epsilon_0}^*\}$. The face $H_{\epsilon_0}$ is not necessarily $F_{\epsilon_0}\cap (F_{\epsilon_0}+t)$ but it must be a face of $F_{\epsilon_0}\cap (F_{\epsilon_0}+t)$, since it is the intersection of two faces $G_{\epsilon_0}, G_{\epsilon_0}+t$ of $F_{\epsilon_0}, F_{\epsilon_0}+t$, respectively.

We depict in Fig. \ref{fig:good-polytopes-intersecting} an example of translates of $G_{\epsilon_0}$ intersecting at a face. Note that there are two types of intersections since the roles of $Y_1,Y_2$ are interchangeable.\footnote{While $F$ in general stands for \emph{face}, one may think that $G$ stands for \emph{good}, and $H$ stands for \emph{horizontal}.}

\begin{figure}[ht]
	\includegraphics[width=0.35\textwidth]{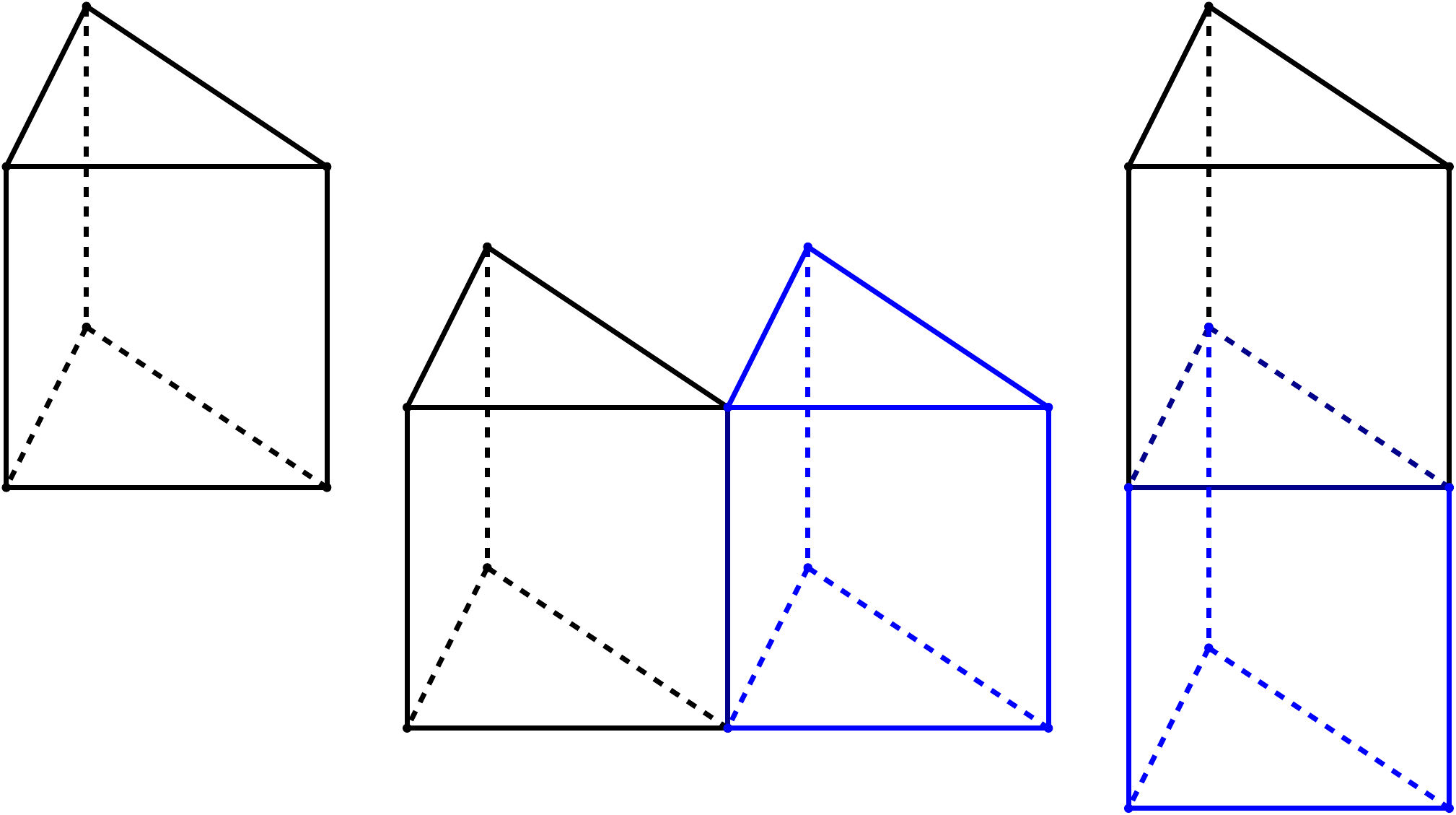}
	\caption{Translates of $G_{\epsilon_0}$ intersecting at a face in two ways}
	\label{fig:good-polytopes-intersecting}
\end{figure}

Consider any $\epsilon<\epsilon_0$. Since $S_2(\epsilon_0)\cap(S_2(\epsilon_0)+t)=\{v^*_{\epsilon_0}\}$, and $S_2(\epsilon)$ lies in the relative interior of $S_2(\epsilon_0)$ (by Proposition \ref{prop:compare-simplices}), it follows that $S_2(\epsilon)\cap(S_2(\epsilon)+t)=\emptyset$. As the projection of $F_\epsilon$ onto $\mathcal R_2$ is $S_2(\epsilon)$ (by Corollary \ref{cor:simplices-projections}), the projection of $F_\epsilon+t$ onto $\mathcal R_2$ is $S_2(\epsilon)+t$. It follows that $F_\epsilon\cap (F_\epsilon + t)=\emptyset$. As $\epsilon$ can be any value smaller than $\epsilon_0$, the intersection $F_{\epsilon_0}\cap (F_{\epsilon_0}+t)$ is the face with normal $p$ of $P\cap (P+t)$. It means that $H_{\epsilon_0}$ is also a face of $P\cap (P+t)$. 
Note that $H_{\epsilon_0}$ is a translate of $S_1(\epsilon_0)$. Meanwhile, the corresponding face $H$ of $F$ is a translate of $S_1$. It follows from Proposition \ref{prop:compare-simplices} with $S_1=S_1(0)$ that the relative interior of $H_{\epsilon_0}$ contains a translate of $H$, thus $H_{\epsilon_0}$ is not a Minkowski summand of $H$ (note that the simplices have positive dimensions).
Therefore, $P\cap (P+t)$ cannot be a Minkowski summand of $P$. In other words, $P$ does not have the generating property.

\section*{Acknowledgement}
The author would like to thank Roman Karasev for his patient reading and commenting on various pieces in early drafts. The author is also grateful for several interesting discussions with Peter McMullen.

\bibliographystyle{unsrt}
\bibliography{genstrmono}

\begin{thebibliography}{1}

\bibitem{mcmullen1974monotypic}
P~McMullen, R~Schneider, and GC~Shephard.
\newblock Monotypic polytopes and their intersection properties.
\newblock {\em Geometriae Dedicata}, 3(1):99--129, 1974.

\bibitem{karasev2001characterization}
RN~Karas{\"e}v.
\newblock On the characterization of generating sets.
\newblock {\em Modelir. Anal. Inf. Sist}, 8(2):3--9, 2001.

\bibitem{Balashov_2000}
M~V Balashov and E~S Polovinkin.
\newblock M-strongly convex subsets and their generating sets.
\newblock {\em Sbornik: Mathematics}, 191(1):25--60, Feb 2000.

\bibitem{borowska2010intersection}
Danuta Borowska and Jerzy Grzybowski.
\newblock The intersection property in the family of compact convex sets.
\newblock {\em Journal of Convex Analysis}, 17(1):173--181, 2010.

\end{thebibliography}

\end{document}